\numberwithin{equation}{section} \pagestyle{plain}
\newtheorem{theorem}{Theorem}[section]
\newtheorem{corollary}{Corollary}[section]
\newtheorem{proposition}{Proposition}[section]
\newtheorem{definition}{Definition}[section]
\newtheorem{remark}{Remark}
\newtheorem{example}{Example}
\def\XX{\boldsymbol{X}}
\def\xx{\boldsymbol{x}}
\def\II{\mathcal{I_P}}
\def\cc{\boldsymbol{c}}
\def\rr{\boldsymbol{r}}
\def\ee{\boldsymbol{e}}
\def\FF{\boldsymbol{F}}
\def\RRR{\boldsymbol{R}}
\def\MC{\mathcal{C}_{\mathcal{H}}}
\def\FFF{\mathcal{F}}
\def\EEE{\mathcal{E}_d}
\def\mm{\boldsymbol{m}}
\def\pp{\boldsymbol{p}}
\def\NN{\mathbb{N}}
\def\RR{\mathbb{R}}
\def\aa{\boldsymbol{a}}
\def\zz{\boldsymbol{z}}
\def\QQ{\mathbb{Q}}
\def\ff{\boldsymbol{f}}
\def\design{\mathcal{X}}
\def\DDD{\mathcal{D}_d}
\DeclareMathOperator\expval{E}
\DeclareMathOperator{\rank}{rank}
\begin{document}

\author{ R.FONTANA\footnote{%
Corresponding author: Roberto Fontana
Department of Mathematical Sciences G. Lagrange, Politecnico di
Torino. Email: roberto.fontana@polito.it}   \\ \textit{\ 
Department of Mathematical Sciences G. Lagrange,} \\ {Politecnico di
Torino.}
\\ P. SEMERARO\\ \textit{\ 
Department of Mathematical Sciences G. Lagrange,} \\ {Politecnico di
Torino.}}
\title{High dimensional Bernoulli distributions:\\
 algebraic representation and applications}
\maketitle

\begin{abstract}
The main contribution of this paper is to find a representation of  the class   $\FFF_d(p)$  of multivariate Bernoulli distributions with the same mean $p$   that allows us to  find its generators analytically in any  dimension.
We map  $\FFF_d(p)$ to an ideal of points and we prove that the class $\FFF_d(p)$ can be generated  from a finite set of simple polynomials. We present two applications. Firstly, we show that polynomial generators help to find extremal points of the convex polytope $\FFF_d(p)$ in high dimensions. Secondly,  we  solve the problem of determining the lower bounds in the convex order for sums of multivariate Bernoulli distributions with given margins, but with an unspecified dependence structure.

\noindent \textbf{Keywords}: multidimensional Bernoulli distribution, ideal of points, extremal points, convex order.
\end{abstract}


\section{Introduction}
%
%
%

This paper proposes an algebraic representation of the
 class $\FFF_d(p)$  of $d$-dimensional Bernoulli distributions with Bernoulli univariate marginal distributions with parameter $p$, - the class of joint distributions of $(X_1,\ldots, X_d)$, where $X_i\sim Bernoulli(p), \,\,\, i=1,\ldots, d$. This representation turns out to be fundamental for solving some open issues regarding this class.
 Our main theoretical contribution is  to find a set of generators of the class $\FFF_d(p)$ analytically  in any  dimension. We build on the geometrical structure of $\FFF_d(p)$.  The class $\FFF_d(p)$ is a convex polytope \cite{fontana2018representation} that   we map  into an ideal of points. Using the Gr\"oebner basis of the ideal  we find an analytical  set of polynomials that  generates the class $\FFF_d(p)$. These polynomial generators are also very simple  in high dimensions. Through two applications we show that this novel representation allows us to address and solve some open problems in applied probability and statistics.

One  open problem in applied probability is to find the  lower bounds in the convex order for sums $S=X_1+\ldots+X_d$ of the components of random vectors in a given Fr\'echet class, i.e.  with given margins and an unspecified  dependence structure (see, e.g. \cite{kaas2000upper}). The upper bound is known to be the upper Fr\'echet bound, while the problem to find the lower bound has been solved only  under specific and restrictive conditions. Sums of Bernoulli random variables have received much attention among researchers in
 computer science, optimization, engineering and finance because of its wide applicability, see e.g. \cite{padmanabhan2021tree} and references therein. The minimum convex order corresponds  to the minimum risk distribution and it is relevant for example in credit risk, where univariate marginal distributions  represent the probability of default of obligors in a credit portfolio (a classical reference for credit risk modelling  is \cite{mcneil2005quantitative}). If obligors belong to the same class of rating, they have the same marginal default probability. Therefore it is  important to assess the risk associated to possible dependence among obligors, especially nowadays since the global economy and risks are  highly interconnected.
 For sums of Bernoulli variables this problem  is solved only if $pd<1$, which  is a restrictive hypothesis for credit portfolios because banks usually handle hundreds of obligors.

The main application of  our theoretical results is to solve the problem of finding lower bounds for sums of the components of  random vectors with pmf in $\FFF_d(p)$ for any $p$ and $d$.
We proceed in two steps. First we find the minimal distribution of sums in the convex order by building on the geometrical structure of discrete distributions. In this first step  we immediately obtain one corresponding pmf in $\FFF_d(p)$, the pmf of exchangeable Bernoulli distributions (see \cite{fontana2020model}). To have a minimal pmf that is not-exchangeable is not trivial, and this is our second step.  We find one pmf in  $\FFF_d(p)$ that is minimal and not-exchangeable by combining what we will define as fundamental polynomials. The proof of this result shows how we can work with polynomials to obtain pmfs that satisfy some conditions. We are confident that this approach can be used to address many other issues.
We also prove that the minimum convex order corresponds to the minimal mean correlation. It is well known that the minimal convex order is associated to negative dependence. In fact for $pd<1$ the minimum convex order of sums is the lower Fr\'echet bound of the class, that is the distribution of the mutually exclusive random variables. Mutual exclusivity  was first studied by \cite{dhaene1999safest}  in the context of insurance and finance. It was  called the safest dependence structure and  was  characterized as the strongest negative dependence structure  by \cite{cheung2014characterizing}. We also propose  a generalization of mutual exclusivity for $pd>1$ and we  study  its  relationship with the minimum convex order, to attempt to find a general notion of the safest dependence structure.

Statistical research extensively  investigates classes of multivariate Bernoulli distributions and its properties (see e.g. \cite{dai2013multivariate}, \cite{marchetti2016palindromic} and \cite{chaganty2006range}) because of the importance of binary data in applications.
 Issues such as high dimensional simulation - see e.g.\cite{haynes2016simulating},\cite{kang2001generating} and \cite{shults2016simulating} - and estimate -\cite{kvam1996maximum}- are very important for any  application and our novel representation is particularly convenient to work in high dimensions.  High dimensional simulation and testing is possible for some classes and under some conditions, for example see \cite{qaqish2003family}, \cite{kang2001generating}, \cite{shults2016simulating}, \cite{emrich1991method}. High dimensional simulation for exchangeable Bernoulli pmfs is addressed in \cite{fontana2021exchangeable}. Exchangeable Bernoulli pmfs are points in a convex polytope whose extremal points are analytically known (\cite{fontana2020model}) and high dimensional simulations is possible because we know how to sample  from a polytope \cite{fontana2021exchangeable}. To use this approach extremal points are necessary.
 In \cite{fontana2018representation} the authors provide a method to explicitly find the extremal points of  $\FFF_d(p)$, but computational complexity increases very quickly and they can not be found in high dimensions.
 We prove that fundamental polynomials are associated to extremal points in the polytope, thus this representation also helps in finding analytically a huge number of extremal points in any dimension. We provide an algorithm. In this application it is evident that working with polynomials is simpler than with pmfs.

The paper is organized as follows. Section \ref{PrDef} introduces the setting and formalizes the problem. The main theoretical results are given in Section \ref{MainR}, where we provide an algebraic representation of $\FFF_d(p)$. Section \ref{algo} uses the algebraic representation to address the issue to find the extremal points and provides an algorithm to find  them in any dimension. Section \ref{lcx} applies the theoretical results  to solve the problem of finding bounds in the convex order for sums of  variables with pmf in $\FFF_d(p)$ if $pd>1$. We conclude this section proving that for $pd>1$ the minimal convex order corresponds to the minimal mean correlation. We also extend the notion of mutually exclusive random vectors to the case $pd>1$  and we study its connection with minimality in the convex order.
Section \ref{Conc} concludes.

\section{Preliminaries and motivation}\label{PrDef}

We build on the geometrical representation of the class $\FFF_d(p)$ as a convex polytope to map it into  a ring of polynomials. This approach requires the introduction of some notation from computational geometry, algebra and algebraic geometry, a standard reference for these topics is \cite{cox2013ideals}.

\subsection{\protect\bigskip Notation \label{not}}

Let $\FFF_d$ be the set of $d$-dimensional probability mass functions (pmfs) which have Bernoulli univariate marginal distributions. Let us consider the Fr\'echet class  $\FFF_d(p)\subseteq\FFF_d$ of pmfs  in $\FFF_d$ which have the same Bernoulli marginal distributions of parameter p, $B(p)$.
We assume throughout the paper that $p$ is rational, i.e.  $p\in \QQ$. Since $\mathbb{Q}$ is dense in $\RR$, this is not a limitation in applications.

If  $\XX=(X_1, \dots, X_d)$ is  a random vector with pmf in $\mathcal{F}_d$, we denote
\begin{itemize}
	\item its cumulative distribution function by $F$ and its mass function by $f$;
	\item the column vector which contains the values of $F$ and $f$ over $\design=\{0, 1\}^d$,  by $\FF= (F_1, \ldots, F_D)=(F_{\xx}:\xx\in\design):=(F(\xx):\xx\in\design)$, and $\ff = (f_1,\ldots, f_{D})=(f_{\xx}:\xx\in\design):=(f(\xx):\xx\in\design)$, $D=2^d$, respectively; we make the non-restrictive hypothesis that the set $\design$ of $2^d$ binary vectors is ordered according to the reverse-lexicographical criterion.
For example $\mathcal{S}_3=\{000, 100, 010, 110, 001, 101, 011, 111\}$;
\end{itemize}
The notations $\ff\in \FFF_d(p)$,  $f\in\FFF_d(p)$,   $\XX\in \FFF_d(p)$ and $\FF\in \FFF_d(p)$   indicate that $f\in\FFF_d(p)$, $\XX\in \FFF_d(p)$, $\XX$  has pmf $f\in \FFF_d(p)$ and  the cdf $F$ has pmf in $\FFF_d(p)$, respectively.

Given a vector $\xx$ we denote by $|\xx|$ the number of elements of $\xx$ which are different from $0$. If $\xx$ is binary, $|\xx|=\sum_{i=1}^dx_i$.
We assume that vectors are column vectors and we denote  $A^{\top}$ the transpose of a matrix $A$.
Given two matrices  $A\in \mathcal{M}(n\times m)$ and $ B\in \mathcal{M}(d\times l)$: \begin{itemize}
\item  the matrix $A\otimes B=((a_{ij}B)_{1\le i\le n, 1\le j\le m})\in \mathcal{M}(nd\times ml)$ indicates their Kronecker product and $A^{\otimes n}$ is $\underbrace{A\otimes \cdots \otimes A}_{n \text{ times}}$;
\item if $n=d$, $A||B$ denotes the row concatenation of $A$ and $B$;
\item  if $m=l$, $A//B$ denotes the column concatenation of $A$ and $B$;
\end{itemize}
%
Finally, we denote by $P(\xx)=\sum_{\alpha\in\mathcal{A}}a_{\alpha}\xx^{\alpha}$ a polynomial in $\mathbb{Q}[x_1,\ldots,x_d]$,  where $\xx^{\alpha}=x_1^{\alpha_1}\cdots x_d^{\alpha_d}$, $\alpha=(\alpha_1,\ldots,\alpha_d)\in\mathcal{A}$ and $\mathcal{A}$ is a proper set of multiindexes. We will often use $\mathcal{A}=\{0,1\}^{d}$.  In some cases we write $a_{\sup(\alpha)}$, where $\sup(\alpha)=\{i\in \{1,\ldots,d\}: \alpha_i\neq0\}$ to simplify the notation, i.e $a_{\{1,4\}}\equiv a_{1,4}$ instead of $a_{(1,0,0,1)}$ and $a_{\emptyset}$ instead of $a_{(0,\ldots, 0)}$. Finally, if no confusion arises $a_j, j=1,\ldots D$ stands for $a_{\alpha}$ where $\alpha\in\{0,1\}^{d}$ is the $j$-th term in reverse lexicographic order, i.e. $a_{10}$ instead of $a_{1,4}=a_{(1,0,0,1)}$.

\subsection{The convex polytope $\FFF_d(p)$}
Let $\XX$ be a multivariate Bernoulli random variable whose distribution belongs to the Fr\'echet class $\FFF_d(p)\subseteq\FFF_d$,  $\XX\in\FFF_d(p)$. We can write
 $$\expval(X_i) =\xx_i^{\top} \ff,$$
where $\xx_i$ is the vector which contains only the $i$-th element of $\xx \in \design$, $i\in\{1,\ldots,d\}$, e.g for the bivariate case $\xx_1^T=(0, 1,0,1)$ and $\xx_2^T=(0, 0,1,1)$.
For $\ff\in \FFF_d(p)$ we have
\begin{eqnarray*}
\begin{cases}
\xx_i^{\top} \ff=p \\
(\boldsymbol{1}-\xx_i)^{\top} \ff =q,
\end{cases}
\end{eqnarray*}
where $\boldsymbol{1}$ is the vector with all the elements equal to $1$ and $q=1-p$. If we consider the odds of the event $X_i=0$,  $c=\frac{1}{\gamma}=q/p$, we have
\[
((\boldsymbol{1}-\xx_i)^{\top} -c \xx_i^{\top}) \ff= 0.
\]
Let $H$ be  the $d \times D$ matrix whose rows, up to a non-influential multiplicative constant, are $((\boldsymbol{1}-\xx_i)^{\top} - c\xx_i^{\top})$, $i\in\{1,\ldots,d\}$.
The probability mass functions $\ff$ in $\FFF_d(p)$ are the positive normalized, i.e. $f_i\geq0$,  $\sum_{i=1}^{D}f_i=1$, solutions  of the linear system
\begin{equation}\label{LS}
H\ff=\boldsymbol{0}.
\end{equation}

From the standard theory of linear equations we know that all the positive, normalized
solutions of \eqref{LS} are the elements of  a  convex polytope, thus
\begin{equation}  \label{cone}
\FFF_d(p)=\{\ff\in \RR^{2^d}: H\ff=0, f_i\geq 0, \sum_{i=1}^Df_i=1\}.
\end{equation}
The solutions of the  linear system in Equation \ref{LS} can be generated as convex combinations of a set of
generators which are referred to as extremal points of the convex polytope. Formally, for any $\ff\in \FFF_d(p)$
there exist $\lambda_1\geq0,\ldots, \lambda_n\geq0$ summing up to one and extremal points $\rr\in \FFF_d(p)$ such that
\begin{equation*}
\ff=\sum_{i=1}^{n}\lambda_i\rr_i.
\end{equation*}

We call  $\rr_i$ extremal points or  extremal pmfs. We denote with $\RRR_i$ a random vector with distribution $\rr_i$.

The
proof of the following proposition follows from Lemma 2.3 in \cite{terzer2009large}.

\begin{proposition}
\label{multinulli} Let us consider the linear system
\begin{equation*}
A\boldsymbol{z}=0,\boldsymbol{z}\in \mathbb{R}^{m}  \label{system}
\end{equation*}%
where $A$ is a $n\times m$ matrix, $n\leq m$ and $\rank(A)=n$. The
extremal points of the polytope \eqref{cone} have at most $n+1$ non-zero
components.
\end{proposition}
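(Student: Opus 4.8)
The plan is to recognise this as the standard linear-programming characterisation of the vertices of a polyhedron written in \emph{standard form}, applied after folding the normalisation constraint into the system. First I would make the polytope \eqref{cone} explicit in the form $\{\boldsymbol{z}\in\RR^m : B\boldsymbol{z}=\boldsymbol{b},\ z_i\ge 0\}$, where I stack the equality rows $A\boldsymbol{z}=\boldsymbol{0}$ together with the normalisation $\boldsymbol{1}^{\top}\boldsymbol{z}=1$:
\[
B=\begin{pmatrix} A\\ \boldsymbol{1}^{\top}\end{pmatrix},\qquad \boldsymbol{b}=(0,\ldots,0,1)^{\top},
\]
so that $B$ is an $(n+1)\times m$ matrix. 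The key observation, to be used at the end, is that $B$ has exactly $n+1$ rows, hence its columns live in $\RR^{n+1}$.

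Next I would state and give the short proof of the characterisation I need: a feasible point $\boldsymbol{z}^{*}$ is an extremal point of the polytope if and only if the columns of $B$ indexed by its support $\{i:z_i^{*}>0\}$ are linearly independent. For the direction I actually use, suppose those columns were dependent; then there is a nonzero $\boldsymbol{d}$ supported on the positive coordinates with $B\boldsymbol{d}=\boldsymbol{0}$, and $\boldsymbol{z}^{*}\pm\varepsilon\boldsymbol{d}$ stays feasible for small $\varepsilon>0$ (the equalities are preserved because $B\boldsymbol{d}=\boldsymbol{0}$, positivity is preserved on the support for $\varepsilon$ small, and the zero coordinates are untouched), so $\boldsymbol{z}^{*}$ is the midpoint of two distinct feasible points and is not extremal. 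Conversely, if $\boldsymbol{z}^{*}=\tfrac12(\boldsymbol{u}+\boldsymbol{w})$ with $\boldsymbol{u}\neq\boldsymbol{w}$ feasible, then $\boldsymbol{d}=\boldsymbol{u}-\boldsymbol{w}\neq\boldsymbol{0}$ satisfies $B\boldsymbol{d}=\boldsymbol{0}$ and vanishes wherever $z_i^{*}=0$ (there $u_i=w_i=0$), so the supported columns are dependent.

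Finally I would combine the two ingredients: since any linearly independent set of columns of the $(n+1)\times m$ matrix $B$ has cardinality at most $n+1$, the characterisation forces the support of every extremal point $\boldsymbol{z}^{*}$ to have at most $n+1$ elements, i.e. $\boldsymbol{z}^{*}$ has at most $n+1$ nonzero components, which is the claim. I do not expect a genuine obstacle here; the only point needing care is making the normalisation row explicit, because it is part of the polytope \eqref{cone} but not of the bare system $A\boldsymbol{z}=\boldsymbol{0}$, and it is precisely this extra row that upgrades the bound from $n$ to $n+1$. I would also remark that the hypothesis $\rank(A)=n$ is not actually used for the upper bound (the argument counts rows of $B$, not its rank); it only ensures that the equality constraints are non-redundant.
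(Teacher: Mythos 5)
Your argument is correct and complete. The paper itself does not write out a proof: it simply states that the result ``follows from Lemma 2.3 in Terzer (2009),'' which is the rank test for extreme rays of the pointed cone $\{\boldsymbol{z}: A\boldsymbol{z}=\boldsymbol{0},\ \boldsymbol{z}\ge 0\}$ --- an extreme ray's support columns of $A$ span a space of dimension $|\mathrm{supp}|-1$, and since that rank is at most $n$ one gets $|\mathrm{supp}|\le n+1$; the extremal points of the normalized polytope are exactly the normalized extreme rays of this cone. You instead adjoin the normalization row to form the $(n+1)\times m$ matrix $B$ and invoke (and prove) the basic-feasible-solution characterisation of vertices of a standard-form polytope, so the bound $n+1$ comes from counting rows of $B$ rather than from ``rank of $A$ plus one.'' The two routes are mathematically equivalent, but yours has the advantage of being self-contained where the paper relies on an external lemma, and your closing remarks are both accurate: it is precisely the normalisation row that upgrades the bound from $n$ to $n+1$, and the hypothesis $\rank(A)=n$ plays no role in the upper bound (it only guarantees the constraints are non-redundant, and in fact the matrix $H$ of the paper is applied with exactly this full-rank property). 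One could quibble that only the ``dependent columns $\Rightarrow$ not extremal'' direction of your characterisation is needed, but including the converse does no harm.
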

\begin{example}\label{example}
As an illustrative example we consider the class $\FFF_3(2/5)$, i.e. $d=3$ and $p=\frac{2}{5}$. We have $c=\frac{3}{2}$ and
\begin{equation}
H=\left(
\begin{array}{cccccccccccccccc}
1&-\frac{3}{2}  &1&-\frac{3}{2}   & 1&-\frac{3}{2}  &1&-\frac{3}{2} \\
1 & 1&-\frac{3}{2} & -\frac{3}{2}& 1 & 1&-\frac{3}{2} & -\frac{3}{2}\\
 1 & 1& 1 & 1 & -\frac{3}{2} & -\frac{3}{2}& -\frac{3}{2}& -\frac{3}{2}
\end{array}%
\right).\label{H3}
\end{equation}%
 In this case the  extremal points can be found using 4ti2 (see \cite{fontana2018representation}).
Giving $H$ as input, we generate  the  matrix $R$ whose columns are  the extremal pmfs  $\rr_{i}$, $i=1,\ldots,9$, which are reported in Table \ref{tab:d}.

\begin{table}[h]
	\centering
		\begin{tabular}{ccc|rrrrrrrrr}
$\xx_1$ &	$\xx_2$ &	$\xx_3$ &	$\rr_1$ & 		$\rr_2$ & 		$\rr_3$ & 		$\rr_4$ & 		$\rr_5$ & 		$\rr_6$ &$\rr_7$&$\rr_8$& $\rr_9$\\
		\hline
0 &	0 &	0 &	$\frac{1}{5}$ &	$\frac{1}{5}$ &	$\frac{1}{5}$&	$\frac{2}{5}$ &	$\frac{3}{5}$&	0 &0&0&0\\
1 &	0 &	0 &	0 &	0 &	$\frac{2}{5}$ &	0 &	0 &	$\frac{1}{5}$&$\frac{1}{5}$&$\frac{2}{5}$&$\frac{3}{10}$\\
0 &	1 &	0 &	0 &	$\frac{2}{5}$ &	0 &	0 &	0 &	$\frac{1}{5}$&$\frac{2}{5}$&$\frac{1}{5}$&$\frac{3}{10}$\\
1 &	1 &	0 &$\frac{2}{5}$ &	0 &	0 &	$\frac{1}{5}$&	0 &	$\frac{1}{5}$ &0&0&0\\
0 &	0 &	1 &	$\frac{2}{5}$&	0 &	0 &	0&	0 &	$\frac{2}{5}$&$\frac{1}{5}$ &$\frac{1}{5}$&$\frac{3}{10}$\\
1 &	0 &	1 &	0 &$	\frac{2}{5}$ &0 &	$\frac{1}{5}$ &	0 &	0 &$\frac{1}{5}$&0&0\\
0 &	1 &	1 &	0 &	0 &$	\frac{2}{5} $&	$\frac{1}{5}$ &	0 &	0&0&$\frac{1}{5}$&0\\
1 &	1 &	1 &	0 &	0 &	0 &	0 &	$\frac{2}{5}$ &	0&0 &0&$\frac{1}{10}$\\
		\end{tabular}
	\caption{Extremal pmfs  $d=3, p=\frac{2}{5}$}
	\label{tab:d}
\end{table}

\end{example}

The dimension of the system \eqref{LS} increases very fast because the number of unknown is $D=2^d$.  Therefore finding all the extremal generators of the system becomes computationally infeasible. For example for the moderate-size case $d=6$ and $p=\frac{2}{5}$ there are $1,251,069$ extremal pmfs. Proposition \ref{multinulli} states that the support of an extremal pmf has at most $d+1$ points. One possible approach for finding some extremal pmfs could be based on the selection of $d+1$ components of $\ff$ and set the others equal to zero. This is equivalent to consider a submatrix $H_1$ of $H$ made by $d+1$ columns of $H$.  The extremal points  of  $H_1$ must be computed. The problem is that we are interested in the \emph{positive} solutions of the system $H_1\zz, \, \zz\in \RR^{d+1}_+$. As we show in Example \ref{ex:diff} this approach could be extremely inefficient because many choices of $H_1$ provide problems where positive solutions apart from the trivial $\ff=\boldsymbol{0}$ do not exist.

\begin{example} \label{ex:diff}
We consider the class $\FFF_5(2/5)$, i.e.  the case  $d=5$ and $p=\frac{2}{5}$. In this case the matrix $H$ has $d=5$ rows and $D=32$ columns. We can compute all the extremal pmfs of $H$. There are $5,162$ extremal pmfs. For $1,000$ times we repeat the random selection of the submatrix $H_1$ of $H$ ($H_1$ is made by $d+1=6$ columns of $H$) and the computation of the extremal pmfs of $H_1$ . We find a non-empty set of extremal pmfs only in $162$ cases, providing an estimate of the success of this simple approach equal to $16.2\%$.
\end{example}

Given the computational limitation of the above representation, we propose a different approach.
The next section describes an algebraic representation of the class $\FFF_d(p)$ that reduces the complexity of the problem and provides a new analytical class of generators. The new generators are themselves extremal, very simple and allow us to easily generate pmfs in the class.

\section{Main results}\label{MainR}
 We make the non restrictive hypothesis that $p\leq 1/2$, i.e. $s\leq t$. Since  $p\in \QQ$, let  $p=s/t$. We get  $c={q}/{p}={(t-s)}/{s}$.
Let $\xx=(x_1, \ldots, x_{d-1})$ and $\xx_i=(1 ,  x_i)^T$. Let us consider the row vectors $\mm_+(\xx)=(m_1(\xx),\ldots, m_{D/2}(\xx))^T:=\xx_{d-1}\otimes\ldots\otimes \xx_1$ and $\mm_-(\xx)=(-m_{D/2}(\xx)+\frac{2s-t}{s},\ldots, -m_1(\xx)+\frac{2s-t}{s})^T$ and finally let $\mm(\xx)=(\mm_+(\xx)||\mm_-(\xx))$, the row vector obtained concatenating $\mm_+(\xx)$ and  $\mm_-(\xx)$.

We define the map $\mathcal{H}$ from $\FFF_d(p)$ to the polynomial ring with rational coefficients $\mathbb{Q}[x_1,\ldots, x_d]$ as:
\begin{equation}\label{Eh}
\begin{split}
&\mathcal{H}: \FFF_d(p)\rightarrow \mathbb{\QQ}[x_1,\ldots, x_{d-1}]\\
&\mathcal{H}:\ff\rightarrow P_f(\xx)=\mm(\xx) \ff.
\end{split}
\end{equation}
We call $\mathcal{C_H}$ the image of $\FFF_d(p)$ through $H$.

By construction, a rearrangement of the coefficient of $P_f(\xx)$ gives

\begin{equation}\label{pol}
P_f(\xx)=\mm(\xx)\ff=\sum_{\alpha\in\{0,1\}^d}a_{\alpha}x^{\alpha}, 
\end{equation}
where $ a_{\alpha}\in \QQ$ are linear combinations of the elements of $\ff$. Specifically, it holds
\begin{equation*}\label{matrixQ}
\boldsymbol{a}=Q\ff,
\end{equation*}
where $\aa=(a_{\alpha}:  \alpha\in\{0,1\}^{d-1})$,
\begin{equation}\label{Eq}
Q=(I(2^{d-1}) ||\tilde{I}(2^{d-1})+\tilde{A}),
\end{equation}
$I(2^{d-1})$ is the $2^{d-1}$ dimensional identity matrix, $\tilde{I}(2^{d-1})$ is the $2^{d-1}$ dimensional matrix with $-1$ on the secondary diagonal and all other entries equal to zero and
 $\tilde{A}$ is a $2^{d-1}$ square matrix whose entries are $\tilde{a}_{1j}=(2s-t)/s, \, j=1,\ldots 2^{d-1}$ and $\tilde{a}_{ij}=0$, $j=2,\ldots, 2^{d-1}$.
 \begin{example}

As in Example \ref{example} we consider $\FFF_3(2/5)$, thus $d=3$, $s=2$, $t=5$, $p=\frac{2}{5}$, $c=\frac{3}{2}$ and $\frac{2s-t}{s}=-\frac{1}{2}$. Given $\ff\in \FFF_3(2/5)$, we have $\boldsymbol{a}=Q\ff$, where

\begin{equation*}
Q=\left(\begin{tabular}{r r  r  r  r r r r } \label{tabray}
1&0&0&0&$-\frac{1}{2}$&$-\frac{1}{2}$&$-\frac{1}{2}$&$-1-\frac{1}{2}$	 \\
0&1 &0 &	0&0&0&-1&0\\
0& 0&1 &	0&0&-1&0&0\\
0& 0&0 &	1&-1&0&0&0\\
\end{tabular}\right).
\end{equation*}
For example the image of $\rr_9$ (one of the extremal pmfs listed in Table \ref{tab:d}) is $P_{\rr_9}(\xx)=-0.3(1-x_1-x_2+x_1x_2)$.

In the view of the proof of Theorem \ref{map}, we observe that the elements of  $\mm_+(\xx)=\mm_+(x_1,x_2)=(1,x_1)\otimes (1,x_2)=(1 \,\,\,x_1 \,\,\,x_2\,\,\, x_1x_2)$ computed in $\xx=(-3/2,1)$, $\xx=(1, -3/2)$ and $\xx=(1,1)$  can be put in a one-to-one correspondence with the first $D/2=4$ columns of the matrix $H$ of Equation \eqref{H3} as represented in Table  \ref{AA},
where $H[i,\{1,2,3,4\}]$ is the $i$-th row of the matrix containing the columns $\{1,2,3,4\}$ of $H$, $i=1,2,3$.
\begin{table}\caption{}\label{AA}
\begin{equation*}
\begin{array}{rcccccc}
\mm_+(\xx)&=& 1 & x_1& x_2 & x_1x_2&  \\
\hline
\mm_+(-3/2, 1)&=&1&-\frac{3}{2}  &1&-\frac{3}{2}& H[1,\{1,2,3,4\}] \\
\mm_+(1,-3/2)&=&1 & 1&-\frac{3}{2} & -\frac{3}{2}&H[2,\{1,2,3,4\}]\\
\mm_+(1, 1)&=&1 & 1& 1 & 1& H[3,\{1,2,3,4\}]
\end{array}%
\end{equation*}
\end{table}
Similarly we  observe that the elements of $\mm_-(\xx)=\mm_-(x_1,x_2)=(-x_1x_1-1/2 \,\,\,-x_2-1/2 \,\,\,-x_1-1/2\,\,\,-1-1/2)$ computed in $\xx=(-3/2,1)$, $\xx=(1, -3/2)$ and $\xx=(1,1)$    can be put in a one-to-one correspondence with the last  $D/2=4$ columns of the matrix $H$ of Equation \eqref{H3} as reported in Table \ref{BBc},
where $H[i,\{5,6,7,8\}]$ is the $i$-th row of the matrix containing the columns $\{5,6,7,8\}$ of $H$, $i=1,2,3$.
\begin{table}\caption{}
\begin{equation*}
\begin{array}{rcccccc}
\mm_{-}(\xx)&=&- x_1x_2-\frac{1}{2} & -x_2-\frac{1}{2}& -x_1-\frac{1}{2}&-\frac{3}{2}&\\
\hline
\mm_-(-3/2, 1)&=&1&-\frac{3}{2}  &1&-\frac{3}{2}&H[1,\{5,6,7,8\}]  \\
\mm_-(1,-3/2)&=&1 & 1&-\frac{3}{2} & -\frac{3}{2}&H[2,\{5,6,7,8\}]\\
\mm_-(1, 1)&=&-\frac{3}{2} & -\frac{3}{2}&-\frac{3}{2} & -\frac{3}{2}& H[3,\{5,6,7,8\}]
\end{array}%
\end{equation*}
\end{table}\label{BBc}

%

\end{example}

\begin{theorem}\label{map}
The map $\mathcal{H}$ maps $\FFF_d(p)$ into polynomials $\sum_{\alpha}a_{\alpha}\xx^{\alpha}\in \QQ[x_1, . . . , x_{d-1}]$, $\alpha\in\{0,1\}^{d-1}$ of  the ideal $\II$ of  polynomials   which vanish at points $\mathcal{P}=\{\boldsymbol{1}_{d-1}, \cc_j, j=1,\ldots,d-1 \}$, where $\boldsymbol{1}_{d-1}=(1,\ldots, 1)$ and $\cc_j=(1,\ldots,1, \underbrace{-c}_\text{j-th element},1, \ldots1)$.

\end{theorem}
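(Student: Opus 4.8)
The plan is to show directly that for every $\ff\in\FFF_d(p)$ the polynomial $P_f(\xx)=\mm(\xx)\ff$ vanishes at each of the $d$ points of $\mathcal{P}$; since $\II$ is by definition the ideal of all polynomials vanishing on $\mathcal{P}$, this places $P_f$ in $\II$. The whole argument reduces to a single structural identity, already visible in Tables \ref{AA}--\ref{BBc} of the preceding example: evaluating the row vector of functions $\mm(\xx)$ at a point of $\mathcal{P}$ reproduces, entry by entry, one of the rows of the matrix $H$. Once this is established the conclusion is immediate, because $\ff\in\FFF_d(p)$ satisfies $H\ff=\boldsymbol{0}$, so $P_f(\boldsymbol{v})=\mm(\boldsymbol{v})\ff=(\text{row of }H)\,\ff=0$ for every $\boldsymbol{v}\in\mathcal{P}$.

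Concretely, I would match points to rows as $\mm(\cc_j)=H[j,\cdot]$ for $j=1,\ldots,d-1$ and $\mm(\boldsymbol{1}_{d-1})=H[d,\cdot]$. First I would record that the Kronecker product $\mm_+(\xx)=\xx_{d-1}\otimes\cdots\otimes\xx_1$ with $\xx_i=(1,x_i)^{\top}$ produces exactly the square-free monomials $\xx^{\alpha}=\prod_i x_i^{\alpha_i}$, indexed by $\alpha\in\{0,1\}^{d-1}$ in the same reverse-lexicographic order used to list the first $D/2$ columns of $H$ (those with $z_d=0$). I would then use that the row $H[i,\cdot]$ carries the entry $1$ in the column indexed by $\zz\in\design$ when $z_i=0$ and the entry $-c$ when $z_i=1$; this is exactly the value of the corresponding monomial once $x_j$ is set to $-c$ and all other coordinates to $1$, since at $\cc_j$ one has $\xx^{\alpha}=(-c)^{\alpha_j}$, giving $1$ when $\alpha_j=0$ and $-c$ when $\alpha_j=1$, while at $\boldsymbol{1}_{d-1}$ every monomial equals $1$. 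This settles the $\mm_+$ block.

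For the second block I would exploit the key fact that the order reversal built into $\mm_-(\xx)=(-m_{D/2}(\xx)+\tfrac{2s-t}{s},\ldots,-m_1(\xx)+\tfrac{2s-t}{s})^{\top}$ corresponds, in reverse-lexicographic order, to complementation of multi-indices: the $k$-th entry of $\mm_-$ carries the monomial $\xx^{\boldsymbol{1}-\alpha_k}$, where $\alpha_k$ is the multi-index sitting in the $k$-th column of the $z_d=1$ block of $H$. Writing $\tfrac{2s-t}{s}=1-c$, evaluation at $\cc_j$ gives $-\xx^{\boldsymbol{1}-\alpha_k}+(1-c)=-(-c)^{\,1-\alpha_{k,j}}+(1-c)$, which equals $1$ when $\alpha_{k,j}=0$ and $-c$ when $\alpha_{k,j}=1$, whereas at $\boldsymbol{1}_{d-1}$ it equals $-1+(1-c)=-c$ throughout. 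These are precisely the entries of the $z_d=1$ half of $H[j,\cdot]$ and of $H[d,\cdot]$, so the two blocks together give $\mm(\cc_j)=H[j,\cdot]$ and $\mm(\boldsymbol{1}_{d-1})=H[d,\cdot]$, completing the identity and hence the theorem.

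The conceptual content is entirely in this evaluation identity; the main obstacle is not difficulty but bookkeeping — one must pin down the reverse-lexicographic indexing of the Kronecker factors, confirm that reversing the list complements the multi-indices, and check that the affine shift $1-c$ is exactly what converts the sign flip in $\mm_-$ into the required $1$ versus $-c$ pattern. The role of the hypothesis $p=s/t$ together with the shift $(2s-t)/s$ is precisely to make $-(-c)+(1-c)=1$ and $-1+(1-c)=-c$, which is the arithmetic heart of the argument. I would present the general case by indexing the columns of $H$ by $\design$ and carrying out the two evaluations above uniformly in $d$, with the worked case $d=3$ of the previous example serving as the template.
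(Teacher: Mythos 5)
Your proposal is correct and follows essentially the same route as the paper: both reduce the theorem to the evaluation identity $\mm(\cc_j)=H[j,\cdot]$, $\mm(\boldsymbol{1}_{d-1})=H[d,\cdot]$, treat the $\mm_+$ and $\mm_-$ blocks separately, and use the shift $(2s-t)/s=1-c$ to handle the second block. The only difference is cosmetic: the paper establishes the $\mm_+$ block by induction on $d$ via the Kronecker recursion, whereas you verify it directly by matching the reverse-lexicographic indexing of monomials to columns of $H$; both verifications are sound.
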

\begin{proof}
Let $H(d)$, $\mm_+^d(\xx)$ and $\mm_+^d(\xx)$  be the matrices $H$ associated to $\FFF_d(p)$, $\mm_+(\xx)$ and $\mm_-(\xx)$ in dimension $d$, respectively.
Let $\ff\in \FFF_d(p)$ and $H_{j.}(d)$ be the $j$-th row of $H(d)$. We show that $H_{j.}(d)\ff$ is the polynomial $P_f(\xx)$ evaluated at $\xx=\cc_j$, that is $H_{j.}(d)\ff=P_f(\cc_j)$, $j=1,\ldots, d-1$, and  $H_{d.}(d)\ff=P_f(\boldsymbol{1}_{d-1}).$

Let $H^+(d)$ be the submatrix of the first   $2^{d-1}$ columns  of $H(d)$ and $H^-(d)$ the submatrix of the last $2^{d-1}$ columns of $H(d)$, i.e. $H(d)=(H^+(d)|| H^-(d))$.

We consider the following two steps. Step one
proves  by induction that $H^+_{j.}(d)=\mm^d_+(\cc_{j})$ , for $j=1, \ldots, d-1$  and  $H^+_{d.}(d)=\mm^d_+(\boldsymbol{1}_{d-1})$.
We consider  $d=2$.  It holds $\cc_1=-c$, $\boldsymbol{1}_{d-1}=1$ and $\mm^2_+(\xx)=(1, x)^T$. We have   $\mm^2_+(\cc_1)=(1, -c)^T$  and $\mm^2_+(\boldsymbol{1}_1)=(1, 1)^T$.
Since
$H^+_{1.}=(1,-c )^T$ and $H^+_{2.}=(1, 1)^T$, the case $d=2$ is proved.

Let us assume that the assert  is true for $d$.  We prove it for $d+1$. We consider separately three cases: $j=1,\ldots {d-1}$, $j=d$ and $j=d+1$.
For $j=1,\ldots {d-1}$, we have $x_j=-c$ and then it must be $x_d=1$ by construction.
Therefore,
$$H^+_{j.}(d+1)=(1, 1)^T\otimes H^+_{j.}(d)=(1,1)\otimes\mm^d_+(\cc_j)=(\mm^d_+(\cc_j)||1)=\mm_+^{d+1}(\cc_j).$$

For $j=d$ we have $x_d=-c$ and
$$H^+_{d.}(d+1)=(1, -c)^T\otimes H^+_{d}(d)=(1, -c)\otimes \mm_+^d(\boldsymbol{1}_d)=\mm_+^{d+1}(\boldsymbol{c}_d).$$

Finally, for $j=d+1$ we have $$H^+_{d+1 .}(d+1)=(1, 1)^T\otimes H^+_{d.}(d)=(1, 1)\otimes \mm_+^d(\boldsymbol{1}_d)=\mm_+^{d+1}(\boldsymbol{1}_d)$$
and the step one is proved.
Step two proves that $H^-_{j.}(d)=\mm^d_-(\cc_{j})$, for $j=1, \ldots, d-1$ and  $H^-_{d.}(d)=\mm^d_-(\boldsymbol{1}_{d-1})$. It is sufficient to  observe that $\mm^d_-(\xx)=(-m_{D/2}(\xx)+\frac{2s-t}{s},\ldots -m_1(\xx)+\frac{2s-t}{s})^T$ and $H^-=(-H^+_{1.}+\frac{2s-t}{s}, \ldots, -H^+_{d.}+\frac{2s-t}{s})$ and the assert of step two easily follows.

As a consequence $H_j(d)=\mm^d(\cc_j),  \, j=1,\ldots, d-1$ and $H_d(d)=\mm^d(\boldsymbol{1}_{d-1})$. Since $\ff\in \FFF_d(p)$ iff $H(d)\ff=0$ we have $\mm^d(\cc_j)\ff=0$ and $\mm^d(\boldsymbol{1}_{d-1})\ff=0$. Therefore $P_f(\xx)=\mm^d(\xx)\ff\in \II$ and by construction (see \eqref{pol}) $\mm^d(\xx)\ff=\sum_{\alpha}a_{\alpha}\xx^{\alpha}$, $\alpha\in\{0,1\}^{d-1}$, with $\aa=Q\ff$ .

\end{proof}

\begin{remark}
The map $\mathcal{H}$ is not injective.
In Example \ref{example}, $\{\rr_1, \rr_2,\rr_3,\rr_4,\rr_5\}\subseteq Ker(\mathcal{H})$.
\end{remark}
It follows that given a polynomial $P(\xx)=\sum_{\alpha}a_{\alpha}\xx^{\alpha}\in \II$, $\alpha\in\{0,1\}^{d-1}$ there are many pmfs in $\mathcal{H}^{-1}(P(\xx))$ as we study in the remaining part of this section.

The following three steps provide an algorithm to find one pmf $\ff_P=(f_1,\ldots, f_D)\in \FFF_d(p)$ associated to a given $P(\xx)=\sum_{\alpha\in\{0,1\}^{d-1}}a_{\alpha}\xx^{\alpha}\in \mathcal{I}_{\mathcal{P}}$.
\begin{enumerate}\label{AlgoPol}
\item
For each   $j\in\{2,\ldots, D/2\}$, if  $a_{j}\geq0$, then $f_j=a_{j}$ and $f_{D+1-j}=0$;  if $a_j<0$ then $f_{j}=0$ and $f_{D+1-j}=-a_j$.
\item
Let $c_0=\sum_{j=1\\,
a_j<0}^{d/2}a_j\frac{2s-t}{s}+a_1$.Then if $c_0\geq0$,  $f_1=c_0$ and $f_D=0$, if  $c_0<0$ then $f_1=0$ and $f_D=-\frac{c_0}{c}$, where $c=\frac{q}{p}$;

%
%
%
%
%
%
%
\item normalize $\ff_p$ getting, with a small abuse of notation $\ff_p:=\ff_p/(\sum_{j=1}^Df_j)$.
\end{enumerate}

It is easy to verify that $\ff_p\in \FFF_d(p)$ and that $\mathcal{H}(\ff_p)=P(\xx)$.
We call $\ff_p$ the type-0 pmf associated to the polynomial $P$.
We describe this algorithm in Example \ref{descr:alg}.
\begin{example}\label{descr:alg}
We consider two polynomials: the first one has a positive constant term, the second one a negative  constant term.
Consider the case of Example \ref{example} $(d=3, p=2/5)$ and let $P(\xx)$ be the polynomial $P(\xx)=x_1x_2-x_1-x_2+1\in \MC$.
We have $a_1=1$, $a_3=a_3=-1$, $a_4=1$ and then:
\begin{enumerate}
\item step 1  immediately gives $\ff_P=(f_1,0,0,1,0,1,1,f_8)$;
\item from step 2 we get $c_0=\frac{1}{2}+\frac{1}{2}+1=2$, thus $f_1=c_0=2$, $f_8=0$ and $\ff_P=(2,0,0,1,0,1,1, 0)$;
\item the normalization step gives  $\ff_P=(\frac{2}{5},0,0,\frac{1}{5},0, \frac{1}{5},\frac{1}{5},0)$. We observe that $\ff_P=\rr_4$, see Table \ref{tab:d}.
\end{enumerate}
Consider now the  polynomial $P(\xx)=-x_1x_2+x_1+x_2-1\in \MC$. We have $a_1=-1$, $a_2=a_3=1$, $a_4=-1$ and then:
\begin{enumerate}

\item step 1 immediately gives  $\ff_P=(f_1,1,1,0,1,0,0,f_8)$;
\item from  step 2 we get $c_0=-1/2$, thus $f_1=0$, $f_D=\frac{1}{2}\cdot \frac{2}{3}=\frac{1}{3}$ and  $\ff_P=(0,1,1,0,1,0,0,\frac{1}{3})$;

\item the normalization step gives  $\ff_P=(0,\frac{3}{10},\frac{3}{10},0, \frac{3}{10},0,0,\frac{1}{10})$. We observe that $\ff_P=\rr_9$, see Table \ref{tab:d}.

\end{enumerate}
\end{example}
In Proposition \ref{ker} given a polynomial $P(\xx)\in \mathcal{I_P}$ we determine all the pmfs such that $\mathcal{H}(f)=P(\xx)$.
\begin{proposition}\label{ker}
 Given a polynomial $P(\xx)=\sum_{\alpha}a_{\alpha}\xx^{\alpha}\in \mathcal{I_P}$, $\alpha\in\{0,1\}^{d-1}$, $$\mathcal{H}^{-1}(P(\xx))=\{\ff_p+\ee_k, \,\,\ \ee_k\in Ker (\mathcal{H})\},
$$
where $\ff_p$ is the type-0 pmf associated to $P(\xx)$. A basis of $ Ker (\mathcal{H})$ is
\begin{equation*}
\begin{split}
\mathcal{B}_K&=\{(q,0,\ldots, 0,p); (1-2p,p,0,\ldots,0,p,0); (1-2p,0,p,0,\ldots0,,p,0,0);\\
&\ldots (1-2p,0,\ldots, 0,p,p,0\ldots,0)\}.
\end{split}
\end{equation*}
\end{proposition}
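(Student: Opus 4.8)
The plan is to use the linearity of $\mathcal{H}$. Recall from \eqref{pol} that $\mathcal{H}(\ff)=P_f(\xx)$ has coefficient vector $\aa=Q\ff$, so $\mathcal{H}$ is the polynomial incarnation of the linear map $\ff\mapsto Q\ff$, and $\mathcal{H}(\ff)$ is the zero polynomial iff $Q\ff=\boldsymbol{0}$. The argument then splits into a soft coset part and an explicit computation of $Ker(\mathcal{H})=\{\boldsymbol{v}:Q\boldsymbol{v}=\boldsymbol{0}\}$.

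For the coset part, the preceding three-step construction already supplies one preimage of $P(\xx)$: the type-0 pmf $\ff_p$ lies in $\FFF_d(p)$ and satisfies $\mathcal{H}(\ff_p)=P(\xx)$. By linearity, for any $\ff$ one has $\mathcal{H}(\ff)=P(\xx)=\mathcal{H}(\ff_p)$ iff $\mathcal{H}(\ff-\ff_p)=0$, i.e. iff $\ff-\ff_p\in Ker(\mathcal{H})$. Setting $\ee_k=\ff-\ff_p$ yields $\mathcal{H}^{-1}(P(\xx))=\{\ff_p+\ee_k:\ee_k\in Ker(\mathcal{H})\}$, which is the first claim.

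For the kernel, I would exploit the block form $Q=(I(2^{d-1})||\tilde{I}(2^{d-1})+\tilde{A})$ from \eqref{Eq}. Writing $\ff=(\ff^+;\ff^-)$ for its first and last $2^{d-1}$ entries, the equation $Q\ff=\boldsymbol{0}$ becomes $\ff^+=-(\tilde{I}(2^{d-1})+\tilde{A})\ff^-$; hence $\ff^-$ is free, $\dim Ker(\mathcal{H})=2^{d-1}$, and a basis is obtained by letting $\ff^-$ run through the standard unit vectors $\uu_k$, $k=1,\dots,2^{d-1}$. Because $\tilde{I}(2^{d-1})$ carries a single $-1$ on the secondary diagonal (so it reverses coordinates) while $\tilde{A}$ adds the constant $(2s-t)/s=2-1/p$ only in the first row, the vector attached to $\uu_k$ is supported, for $k<2^{d-1}$, on the first (constant) coordinate and on the symmetric pair of positions $2^{d-1}+1-k$ and $2^{d-1}+k$, and, for $k=2^{d-1}$, on the first and last coordinates only. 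Each such vector sums to $1/p$; multiplying by $p$ turns it into a member of $\mathcal{B}_K$, the pairs giving $(1-2p,\dots,p,\dots,p,\dots)$ and the last giving $(q,0,\dots,0,p)$. As nonzero multiples of a spanning set of $Ker(\mathcal{H})$ whose $\ff^-$-blocks are the distinct vectors $\uu_k$, they are linearly independent, so $\mathcal{B}_K$ is a basis.

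The genuinely delicate point, and the step I would treat as the main obstacle, is the claim underpinning the coset argument: that the three-step algorithm really returns an honest preimage in the class. One must check that the sign-dependent assignment in Steps 1--2 reproduces the prescribed coefficients through $\aa=Q\ff_p$ and keeps every mass non-negative, and that the constant-term correction in Step 2 (via the odds $c=q/p$) together with the normalization in Step 3 lands $\ff_p$ inside the simplex; the marginal constraints $H\ff_p=\boldsymbol{0}$ are then automatic, since by Theorem \ref{map} they coincide with the vanishing of $P(\xx)=\mathcal{H}(\ff_p)$ at the points $\mathcal{P}$, which holds because $P(\xx)\in\II$. A secondary subtlety is that $\mathcal{H}^{-1}(P(\xx))$ should be read as the intersection of the affine coset $\ff_p+Ker(\mathcal{H})$ with $\FFF_d(p)$, so that only those $\ee_k$ keeping $\ff_p+\ee_k$ non-negative and of unit mass actually contribute a pmf.
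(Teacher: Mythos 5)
Your proposal is correct and follows essentially the same route as the paper: both reduce the problem to linear algebra on the coefficient matrix $Q$ of \eqref{Eq}, using that its first $2^{d-1}$ columns form an identity block to get $\dim Ker(\mathcal{H})=D/2$ and then exhibiting $\mathcal{B}_K$ as $D/2$ independent kernel vectors. Your explicit solve $\ff^+=-(\tilde{I}(2^{d-1})+\tilde{A})\ff^-$ with $\ff^-$ ranging over unit vectors is in fact slightly more complete than the paper's proof, since it actually verifies that the listed vectors lie in the kernel (the paper only checks their number and independence), and your closing remark that $\mathcal{H}^{-1}(P(\xx))$ must be intersected with the simplex to yield genuine pmfs is a fair reading of a point the paper leaves implicit.
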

\begin{proof}
Notice that  $Ker(\mathcal{H})$ in \eqref{Eh} coincides with $Ker(Q)$, where $Q$ is the matrix  given in \eqref{Eq} which  is the coefficient matrix of the linear application $\mathcal{H}$ between $\RR^{D}$ and $\RR^{D/2}$. Since $\rank(Q)=D/2$ (it is enough to observe that the first $D/2$ columns of $Q$ are the identity matrix), we have $\rank(Ker(Q))=D/2$. Now it sufficient to observe that $\mathcal{B}_K$ is a set of  $D/2$  linearly independent vectors.

\end{proof}
By construction we observe that type-0 pmfs are characterized by the condition that only one of  the components $f_i$ or $f_{D-i+1}$ of $\ff_p$  can be different from zero.
We can now classify the pmfs in $\FFF_d(p)$ as follows.

\begin{definition}
We say that $\ff=(f_1,\ldots, f_D)$ is of type-0 if it is the particular solution $\ff_P$ corresponding to a polynomial $P(\xx)\in \mathcal{I_P}$, it is of  type-1K if $\ff\in Ker(\mathcal{H})$  and it is of type-1 otherwise.
\end{definition}
\begin{example}
In the Example \ref{example} the extremal point $\rr_9=(0,\frac{3}{10},\frac{3}{10},0,\frac{3}{10},0,0,\frac{1}{10})$ is of type-0 and $\rr_5=(\frac{3}{5},0,0,0,0,0,0,\frac{2}{5})$ is of type-1K,  while $\rr_6=(0, \frac{1}{5},\frac{1}{5},\frac{1}{5},\frac{2}{5},0,0,0)$  is of type-1, since $f_4=f_{8-3}=f_5$. Notice that $\mathcal{H}(\rr_6)=1/5(1-x_1+x_2-x_1x_2)$ and $\mathcal{H}(\rr_5)=0$
\end{example}

\begin{proposition}
The Gr\"obner basis of $\II$ with respect to the lexicographic order is $GB=\{G_i(\xx)=(x_i-1)(x_i+c), G_{ik}(\xx)=1-x_i-x_x+x_ix_k,  i,k=,\ldots, 2^{d-1}, i<k\}$. A basis of the quotient space $\mathbb{Q}[x_1,\ldots, x_{d-1}]/\II$ is $\{1,x_1,\ldots, x_{d-1}\}$.
\end{proposition}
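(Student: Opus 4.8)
The plan is to exploit that $\II$ is the vanishing ideal of a \emph{finite set of distinct points}, which fixes the $\QQ$-dimension of the quotient ring, and then to squeeze the candidate ideal generated by $GB$ between two ideals of equal finite codimension. First I would verify membership. Writing $G_i(\xx)=(x_i-1)(x_i+c)$ and $G_{ik}(\xx)=(1-x_i)(1-x_k)$, recall that every point of $\mathcal{P}$ has all coordinates equal to $1$, except that $\cc_j$ carries a single $-c$ in position $j$. Hence $G_i$ vanishes at each point because its $i$-th coordinate is one of the two roots $1,-c$ of $G_i$ viewed as a polynomial in $x_i$; and $G_{ik}$ vanishes because in any pair of coordinates at least one equals $1$ (at most one coordinate of any point of $\mathcal{P}$ differs from $1$). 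Thus, writing $J$ for the ideal generated by the polynomials of $GB$, we have $J\subseteq \II$.

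Next I would record the leading terms under the lexicographic order $x_1>\cdots>x_{d-1}$: $\mathrm{LT}(G_i)=x_i^2$ and $\mathrm{LT}(G_{ik})=x_ix_k$ for $i<k$. The monomial ideal $M:=\langle \mathrm{LT}(g): g\in GB\rangle=\langle x_ix_k : 1\le i\le k\le d-1\rangle$ is precisely the ideal of all monomials of total degree $\ge 2$, so the monomials outside $M$ are exactly $\{1,x_1,\ldots,x_{d-1}\}$ and $\dim_\QQ \QQ[x_1,\ldots,x_{d-1}]/M=d$. On the other side, since $c=q/p>0$ the $d$ points $\boldsymbol{1}_{d-1},\cc_1,\ldots,\cc_{d-1}$ are pairwise distinct, so evaluation at these points identifies $\QQ[x_1,\ldots,x_{d-1}]/\II$ with $\QQ^{d}$ and $\dim_\QQ \QQ[x_1,\ldots,x_{d-1}]/\II=|\mathcal{P}|=d$.

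The conclusion then follows from the dimension chain
\begin{equation*}
d=\dim_\QQ \QQ[x_1,\ldots,x_{d-1}]/\II \;\le\; \dim_\QQ \QQ[x_1,\ldots,x_{d-1}]/J \;=\; \dim_\QQ \QQ[x_1,\ldots,x_{d-1}]/\langle \mathrm{LT}(J)\rangle \;\le\; d,
\end{equation*}
where the first inequality uses the surjection induced by $J\subseteq\II$, the middle equality is the standard-monomial (Macaulay) theorem that a quotient and its leading-term quotient have equal $\QQ$-dimension, and the last inequality uses $M\subseteq\langle \mathrm{LT}(J)\rangle$, which holds because $GB\subseteq J$. As every term equals $d$, each containment of equal finite codimension is forced to be an equality: $J=\II$ and $\langle \mathrm{LT}(J)\rangle=M$. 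The latter is exactly the assertion that $GB$ is a Gr\"obner basis of $\II$; the standard monomials outside $M$, namely $\{1,x_1,\ldots,x_{d-1}\}$, then form a $\QQ$-basis of $\QQ[x_1,\ldots,x_{d-1}]/\II$.

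The only thing to get right, more bookkeeping than genuine obstacle, is this dimension squeeze: one must correctly invoke that a quotient shares its $\QQ$-dimension with its leading-term quotient, and that an inclusion of ideals of equal finite codimension is an equality. As a self-contained alternative that avoids the point-counting input, one could instead verify Buchberger's criterion directly: all pairs with coprime leading monomials vanish by the first Buchberger criterion, and the only overlapping pairs, $(G_i,G_{ij})$ and $(G_{ij},G_{ik})$, produce $S$-polynomials that reduce to $0$ modulo $GB$ after the substitutions $x_i^2\mapsto (1-c)x_i+c$ and $x_ix_j\mapsto x_i+x_j-1$. This is routine but longer, so I would keep the dimension argument as the primary route.
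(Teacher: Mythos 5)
Your proof is correct, but it takes a genuinely different route from the paper's. The paper works directly with leading terms: it first checks that the variety of $\langle GB\rangle$ is exactly $\mathcal{P}$ (every coordinate of a common zero must be $1$ or $-c$, and at most one coordinate can equal $-c$), and then argues that any $A\in\II$ whose leading term is divisible by no $x_i^2$ and no $x_ix_k$ would have degree at most one, hence could not vanish at both $\boldsymbol{1}_{d-1}$ and $\cc_k$ unless it is zero; this verifies the divisibility condition defining a Gr\"obner basis element by element. You instead run the standard finite-point dimension squeeze: membership $J\subseteq\II$ via the factorizations $(x_i-1)(x_i+c)$ and $(1-x_i)(1-x_k)$, the count $\dim_\QQ \QQ[x_1,\ldots,x_{d-1}]/\II=|\mathcal{P}|=d$ from distinctness of the $d$ points, the observation that the leading monomials of $GB$ generate all monomials of degree at least two, and Macaulay's standard-monomial theorem, which together force every containment in your chain to be an equality. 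Your route buys something concrete: it yields $J=\II$ (that $GB$ generates the vanishing ideal itself, not merely an ideal with the right variety, a point the paper's variety computation alone would need radicality or a separate argument to settle) and it delivers the quotient basis $\{1,x_1,\ldots,x_{d-1}\}$ in the same stroke. The paper's argument, in exchange, is more elementary and self-contained, resting only on the division characterization of Gr\"obner bases and a two-point evaluation. Both establish the statement; your suggested Buchberger-criterion fallback is also viable but, as you say, longer.
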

\begin{proof}
Let $\II=\langle G_i(\xx)\rangle$.
We prove that  the variety $V(\II)$ of $\II$ is $\mathcal{P}$. $\mathcal{P}\subset V(\II)$ is easy to verify, so we prove the other inclusion.  If $\xx\in V(\II)$, then $x_j=1$ or $x_j=-c$. In fact if there is $k:x_k\neq 1, -c$ then $G_k(\xx)=(x_k-1)(x_k+c)\neq 0$. If $x_k=-c$ then $x_j=1, \forall j\neq k, j=1,\ldots, d-1.$ In fact if there is $i: x_i=-c$, then $G_{ik}(\xx)=x_ix_k-x_1-x_k+1\neq 0$ and $\xx\notin V(\II)$. Thus $\xx=\boldsymbol{1}_{d-1}$ or $\xx=\cc_j, j=1,\ldots d-1$ that is $\xx\in \mathcal{P}$.

$GB$ is a Gr\"oebner basis for $\II$.
Let $A(\xx)\in \II$ and let $a^*x^{\alpha}$ its leading term. If $x_i^2\not\vert  a^*x^{\alpha}$ and $x_ix_j\not\vert a^*x^{\alpha}$, that is $x_ix_j$ does not divide $a^*x^{\alpha}$, for any $i, j$ then $a^*x^{\alpha}=a^*x_k$ for a $k=1,\ldots d-1$. Thus $A(\xx)$ has degree one and cannot be zero both in $\cc_k$ and $\boldsymbol{1}_{d-1}$. Thus, since $A(\xx)\in \II$,  its leading term $LT(A(\xx))$ must be divisible by one of the $LT(G(\xx)), G(\xx)\in GB$ and $GB$ is a Gr\"oebner basis.

\end{proof}

\begin{proposition}
For each $n\in \NN$, $2\leq n\leq d-1$, the monomials $\pi_{j_1,\ldots, j_n}(\xx)=\prod_{i=1}^nx_{j_i}$ have remainder $R_{j_1,\ldots, j_n}(\xx)=\sum_{i=1}^nx_{j_i}-(n-1)$.

\end{proposition}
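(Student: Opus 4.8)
The plan is to compute the normal form of $\pi_{j_1,\ldots,j_n}(\xx)$ modulo the Gr\"obner basis $GB$ and then verify that the claimed expression $R_{j_1,\ldots,j_n}(\xx)=\sum_{i=1}^n x_{j_i}-(n-1)$ is already reduced; since $GB$ is a Gr\"obner basis of $\II$, the remainder is unique, so a reduced representative congruent to $\pi_{j_1,\ldots,j_n}(\xx)$ must be the remainder. Throughout I would use that the indices $j_1,\ldots,j_n$ are distinct, so the monomial is squarefree. The single relation driving the computation comes from the generators $G_{ik}$: setting $y_i:=1-x_i$, we have $G_{ik}(\xx)=(1-x_i)(1-x_k)=y_iy_k\in\II$, equivalently $x_ix_k\equiv x_i+x_k-1 \pmod{\II}$.

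First I would record that, because $\II$ is an ideal and $y_iy_k\in\II$ for all $i\neq k$, every squarefree product $\prod_{i\in S}y_{j_i}$ with $|S|\ge 2$ lies in $\II$. Then I would expand the monomial in the shifted variables: since the $j_i$ are distinct, $\pi_{j_1,\ldots,j_n}(\xx)=\prod_{i=1}^n(1-y_{j_i})=\sum_{S\subseteq\{1,\ldots,n\}}(-1)^{|S|}\prod_{i\in S}y_{j_i}$. Reducing modulo $\II$ annihilates every term with $|S|\ge 2$, leaving only $|S|\le 1$, so $\pi_{j_1,\ldots,j_n}(\xx)\equiv 1-\sum_{i=1}^n y_{j_i}=1-\sum_{i=1}^n(1-x_{j_i})=\sum_{i=1}^n x_{j_i}-(n-1)=R_{j_1,\ldots,j_n}(\xx)$. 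The same congruence can alternatively be obtained by induction on $n$, multiplying the inductive hypothesis by $x_{j_n}$ and applying $x_{j_n}x_{j_i}\equiv x_{j_n}+x_{j_i}-1$ to each product that appears.

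It then remains to confirm that $R_{j_1,\ldots,j_n}(\xx)$ is the remainder rather than merely a congruent polynomial. For this I would note that under the lexicographic order the leading terms of $GB$ are $LT(G_i)=x_i^2$ and $LT(G_{ik})=x_ix_k$, so a monomial is reduced precisely when its total degree is at most one. As $R_{j_1,\ldots,j_n}(\xx)$ is a sum of degree-one monomials $x_{j_i}$ together with the constant $-(n-1)$, none of its terms is divisible by any leading term of $GB$; hence it is reduced, and by uniqueness of the normal form it is exactly the remainder of $\pi_{j_1,\ldots,j_n}(\xx)$.

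The only genuine subtlety, and the point I would double-check, is that the reduction uses only the relations $G_{ik}$ and never the quadratic relations $G_i$. This is legitimate precisely because $j_1,\ldots,j_n$ are distinct: the monomial is squarefree, so no square $x_i^2$ is ever created in the course of the reduction, and the $G_i$ play no role. Everything else is routine bookkeeping, so I do not expect a serious obstacle beyond keeping the inclusion--exclusion signs straight.
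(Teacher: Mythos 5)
Your proof is correct, but it takes a genuinely different route from the paper's. The paper argues by induction on $n$: the base case $n=2$ is read off from $G_{ij}(\xx)=x_ix_j-x_i-x_j+1\in\II$, and the inductive step multiplies the hypothesis by $x_{n+1}$ and repeatedly applies $x_{n+1}x_j\equiv x_{n+1}+x_j-1$ to each product that appears, tracking the ideal elements $\tilde I$, $\tilde I^*$ that accumulate along the way. You instead pass to the shifted variables $y_i=1-x_i$, observe that $G_{ik}=y_iy_k$ so that every squarefree product of at least two of the $y_{j_i}$ lies in $\II$, and obtain the congruence in one stroke by inclusion--exclusion on $\prod_i(1-y_{j_i})$; this makes the combinatorial structure of the ideal transparent and avoids the bookkeeping of the inductive step (indeed, your parenthetical remark that the same congruence follows by induction is essentially the paper's argument). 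You also add a step the paper leaves implicit: checking that $\sum_{i=1}^n x_{j_i}-(n-1)$ is already in normal form with respect to the leading terms $x_i^2$ and $x_ix_k$ of the Gr\"obner basis, so that uniqueness of the normal form identifies it as \emph{the} remainder rather than merely a congruent polynomial. Both arguments rest on the preceding proposition that $GB$ is a Gr\"obner basis of $\II$ with quotient basis $\{1,x_1,\ldots,x_{d-1}\}$; yours is the cleaner derivation of the congruence, the paper's is more self-contained in that it exhibits the explicit ideal element witnessing each reduction.
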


\begin{proof}
Without loss of generality we consider the monomials  $\pi_n(\xx):=\pi_{1,\ldots, n}(\xx)=x_1\cdots x_n$ and $R_n(\xx):=R_{1,\ldots, n}(\xx)$.

By induction. If $n=2$ this follows because $G_{ij}(\xx)=x_ix_j-x_i-x_j+1\in \II$ and therefore $x_ix_j$ has the remainder $x_i+x_j-1$.

We now prove that $n\rightarrow n+1$. By inductive hypothesis
\begin{equation*}
\begin{split}
\prod_{i=1}^{n+1}x_j=x_{n+1}\prod_{j=1}^{n}x_j=x_{n+1}(I(x)+\sum_{j=1}^nx_j-(n-1)),
\end{split}
\end{equation*}
where $I(\xx)=x_1\cdot\ldots\cdot x_n-\sum_{i=1}^nx_i+n-1\in \mathcal{I_P}$. Thus
\begin{equation*}
\begin{split}
\prod_{i=1}^{n+1}x_j&=x_{n+1}(I(\xx)+\sum_{j=1}^nx_j-(n-1))\\
&=x_{n+1}I(x)+\sum_{j=1}^nx_{n+1}x_j-x_{n+1}(n-1)\\
&=\tilde{I}(x)+\sum_{j=1}^n(G_{n+1, j}(\xx)+x_{n+1}+x_j-1)-x_{n+1}(n-1)
\end{split}
\end{equation*}
where $\tilde{I}(\xx)\in I$. It follows that
\begin{equation*}
\begin{split}
\prod_{i=1}^{n+1}x_j
&=\tilde{I}(x)+\sum_{j=1}^nG_{n+1, j}(\xx)+nx_{n+1}+\sum_{j=1}^nx_j-n-nx_{n+1}+x_{n+1}\\
&=\tilde{I}^*(x)+\sum_{j=1}^nx_j-n+x_{n+1}=\tilde{I}^*(x)+\sum_{j=1}^{n+1}x_j-n,
\end{split}
\end{equation*}
where $\tilde{I}^*(\xx)=\tilde{I}(x)+\sum_{j=1}^nG_{n+1, j}(\xx)\in I$, and the assert is proved.
\end{proof}

We call the polynomials $F_{j_1,\ldots, j_n}(\xx)=\pi_{j_1,\ldots, j_n}(\xx)-R_{j_1,\ldots, j_n}(\xx)$ fundamental polynomials of the ideal $I_{\mathcal{P}}$. In particular we denote by $F_n(\xx)$ the fundamental polynomial $F_n(\xx):=F_{1,\ldots, n}(\xx)$, $n=2,\ldots, d-1$.

\begin{corollary}\label{polI}
The polynomials $P_f(\xx)=\mm(\xx)\ff$ are linear combinations of the fundamental polynomials. In particular they have the form $P_f(\xx)=Q(\xx)-R(\xx)$, where
$Q(\xx)=\sum_{k=2,\ldots, d-1, j_1<j_2\ldots<j_{k}}a_{j_1\ldots j_{k}}x_{j_1}\ldots x_{j_{k}}$ and  $R(\xx)=b_0+\sum_{j=1}^{d-1}b_jx_j$ is the remainder.
\end{corollary}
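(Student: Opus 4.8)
The plan is to use that, by Theorem \ref{map}, every $P_f(\xx)=\mm(\xx)\ff$ already belongs to $\II$ and that all its monomials $\xx^{\alpha}$, $\alpha\in\{0,1\}^{d-1}$, are squarefree. First I would split $P_f$ according to degree, writing $P_f(\xx)=Q(\xx)+L(\xx)$ with $Q(\xx)=\sum_{|\alpha|\geq 2}a_{\alpha}\xx^{\alpha}$ the part collecting the monomials of degree at least two and $L(\xx)=a_{\emptyset}+\sum_{j=1}^{d-1}a_{\{j\}}x_j$ the part of degree at most one.

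Next I would rewrite each higher-degree monomial through its fundamental polynomial. For a monomial $\xx^{\alpha}$ with $\sup(\alpha)=\{j_1,\ldots,j_n\}$ and $n\geq 2$, abbreviate $F_{\alpha}:=F_{j_1,\ldots,j_n}$ and $R_{\alpha}:=R_{j_1,\ldots,j_n}$; by the preceding proposition $\xx^{\alpha}=\pi_{j_1,\ldots,j_n}(\xx)=F_{\alpha}(\xx)+R_{\alpha}(\xx)$, where $R_{\alpha}$ has degree one. Substituting into $Q$ gives $Q(\xx)=\sum_{|\alpha|\geq 2}a_{\alpha}F_{\alpha}(\xx)+R_Q(\xx)$, where $R_Q(\xx):=\sum_{|\alpha|\geq 2}a_{\alpha}R_{\alpha}(\xx)$ has degree at most one and, by linearity of reduction, is exactly the remainder of $Q$ modulo $GB$. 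Hence $P_f(\xx)=\sum_{|\alpha|\geq 2}a_{\alpha}F_{\alpha}(\xx)+\bigl(R_Q(\xx)+L(\xx)\bigr)$.

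The decisive step, which I expect to be the main point requiring care, is to show that the degree-$\leq 1$ polynomial $R_Q+L$ vanishes identically. Each fundamental polynomial lies in $\II$, and $P_f\in\II$ by Theorem \ref{map}, so $R_Q+L=P_f-\sum_{|\alpha|\geq 2}a_{\alpha}F_{\alpha}$ also lies in $\II$. By the earlier proposition establishing that $\{1,x_1,\ldots,x_{d-1}\}$ is a basis of the quotient $\mathbb{Q}[x_1,\ldots,x_{d-1}]/\II$, the monomials $1,x_1,\ldots,x_{d-1}$ are standard (not divisible by any leading term of $GB$), so $R_Q+L$ is its own normal form; being in $\II$, this normal form is $0$. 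Equivalently, a polynomial of degree at most one that reduces to $0$ in the quotient must be the zero polynomial. Therefore $L=-R_Q$, and it is precisely here that the special arithmetic of the points $\mathcal{P}$ enters, through the quotient-basis result.

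Putting the pieces together gives $P_f(\xx)=\sum_{|\alpha|\geq 2}a_{\alpha}F_{\alpha}(\xx)$, so $P_f$ is a linear combination of the fundamental polynomials. Rewritten in terms of monomials this reads $P_f(\xx)=Q(\xx)-R(\xx)$ with $R(\xx)=R_Q(\xx)=-L(\xx)$ of the form $b_0+\sum_{j=1}^{d-1}b_jx_j$ (namely $b_0=-a_{\emptyset}$ and $b_j=-a_{\{j\}}$), which is exactly the remainder asserted in the statement.
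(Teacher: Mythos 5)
Your proposal is correct and follows essentially the same route as the paper's proof: both split $P_f$ into its degree-$\ge 2$ part $Q$ and its linear part, use the remainders $R_{j_1,\ldots,j_k}$ of the monomials from the preceding proposition together with the quotient-basis $\{1,x_1,\ldots,x_{d-1}\}$ and the fact that $P_f\in\II$ to force the linear part to equal $-\sum a_{j_1\ldots j_k}R_{j_1,\ldots,j_k}$, and conclude that $P_f=\sum a_{j_1\ldots j_k}F_{j_1,\ldots,j_k}$. Your explicit justification that a degree-$\le 1$ element of $\II$ must vanish (because $1,x_1,\ldots,x_{d-1}$ are standard monomials) is a slightly more careful rendering of the step the paper states tersely, but it is the same argument.
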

%

\begin{proof}
From
\begin{equation}
P_f(\xx)=\mm(\xx)\ff=a_{\emptyset}+\sum_{k=1}^{d-1}a_{k}x_{k}+\sum_{k=2}^{d-1}\sum_{j_1<j_2\ldots<j_k}a_{j_1\ldots j_k}x_{j_1}\ldots x_{j_k}, \,\,\, a_{\emptyset}, a_{j_1\ldots j_k}\in \QQ,
\end{equation}
Let  $$Q(\xx)=\sum_{k=2}^{d-1}\sum_{j_1<j_2\ldots<j_k}a_{j_1\ldots j_k}x_{j_1}\ldots x_{j_k}.$$
Since $\{1, x_1,\ldots, x_{d-1}\}$ is a basis of the quotient space and $P_f(\xx)\in \II$
\begin{equation}\label{restQ}
-(a_0+\sum_{k=1}^{d-1}a_{k}x_{j_1})
\end{equation}
is the remainder of $Q(\xx)$.

We also have that the reminder of $Q(\xx)$ is $\sum_{k=2}^{d-1}\sum_{j_1<j_2\ldots<j_k}a_{j_1\ldots j_{d-1}}\pi_{j_1,\ldots, j_k}(\xx).$ 
 Since \eqref{restQ}, then $a_0+\sum_{j=1}^{k}a_jx_j=-\sum_{k=2}^{d-1}\sum_{j_1<j_2\ldots<j_k}a_{j_1\ldots j_{d-1}}R_{j_1,\ldots, j_{k}}(\xx)$ and the thesis follows.
\end{proof}
Notice that the polynomials in $\MC$ are the same for each marginal probability  $p$. The probability $p$ defines the points $\mathcal{P}$ of the variety $V(I_{\mathcal{P}})$.

We have proved that  the set of fundamental polynomials is a set of generators of $\mathcal{C}_{\mathcal{H}}$. And therefore we can use them to generate $\FFF_d(p)$. All pmfs can be generated as linear combinations of the fundamental polynomials, computing the corresponding type-0 pmf and eventually adding an element of $Ker(\mathcal{H})$.

We now use fundamental polynomials to address two open issues. The first is the search of the extremal generators of the convex polytope $\FFF_d(p)$ and the second is the study of bounds for the distributions in $\FFF_d(p)$. We provide an algorithm to find extremal points, all in principle and many in practice and we find an analytical solution the second problem under the convex order of sums.

\section{An algorithm to find extremal pmfs}\label{algo}

The first step to construct an algorithm for generating extremal pmfs is to prove that type-0 pmfs associated to fundamental polynomials are extremal themselves.

\begin{proposition}\label{type0}
Let $\ff\in \FFF_d(p)$ be the type-0 pmf associated  to a fundamental polynomial. The pmf $\ff$ is an extremal probability mass function.
\end{proposition}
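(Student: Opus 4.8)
The plan is to combine the explicit form of the type-0 pmf produced by the three-step algorithm with the standard vertex criterion for a polytope in standard form: a feasible point of $\FFF_d(p)=\{\ff\ge 0: H\ff=0,\ \boldsymbol{1}^{\top}\ff=1\}$ is extremal if and only if the columns of the augmented matrix $\tilde H:=(H// \boldsymbol{1}^{\top})$ indexed by the support of $\ff$ are linearly independent. Thus the proof reduces to (i) pinning down the support of the type-0 pmf attached to a fundamental polynomial and (ii) verifying a small linear independence.

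First I would run the algorithm on $F_{j_1,\ldots,j_n}(\xx)=x_{j_1}\cdots x_{j_n}-\sum_{i=1}^n x_{j_i}+(n-1)$; permuting coordinates lets me assume $j_i=i$. Its nonzero coefficients are $a_{\emptyset}=n-1$, $a_{\{i\}}=-1$ for $i=1,\ldots,n$, and $a_{\{1,\ldots,n\}}=1$. Step 1 places mass $1$ at the binary vector $\boldsymbol{v}_0=(1,\ldots,1,0,\ldots,0)$ with ones in the first $n$ coordinates (from the positive product coefficient) and mass $1$ at each complement $\boldsymbol{w}_i=\boldsymbol{1}-\ee_i$, $i=1,\ldots,n$ (from the negative linear coefficients), all remaining Step-1 components being $0$. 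Since $\tfrac{2s-t}{s}=1-c$ and the only negative coefficients are these $n$ linear ones, Step 2 gives $c_0=-n(1-c)+(n-1)=nc-1$, which is $\ge n-1\ge 1>0$ because $p\le 1/2$ forces $c\ge 1$; hence $f_1=c_0>0$ sits at $\boldsymbol{0}$ and $f_D=0$. The support is therefore the $n+2$ distinct points $\{\boldsymbol{0},\boldsymbol{v}_0,\boldsymbol{w}_1,\ldots,\boldsymbol{w}_n\}$, and $n+2\le d+1$ is consistent with Proposition \ref{multinulli}.

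Next I would write out the corresponding columns of $\tilde H$, using that the entry of $H$ in row $k$, column $\xx$ is $1$ when $x_k=0$ and $-c$ when $x_k=1$. Denoting by $\boldsymbol{h}(\xx)$ the augmented column, we get $\boldsymbol{h}(\boldsymbol{0})=(1,\ldots,1,1)^{\top}$; $\boldsymbol{h}(\boldsymbol{v}_0)$ with $-c$ in rows $1,\ldots,n$ and $1$ in rows $n+1,\ldots,d$; and $\boldsymbol{h}(\boldsymbol{w}_i)$ with $1$ in row $i$ and $-c$ in the other rows $\le d$; each carries a final entry $1$ from the normalization row. Imposing $\lambda_0\boldsymbol{h}(\boldsymbol{0})+\mu\boldsymbol{h}(\boldsymbol{v}_0)+\sum_{i=1}^n\nu_i\boldsymbol{h}(\boldsymbol{w}_i)=\boldsymbol{0}$, every row $k>n$ collapses to the single equation $\lambda_0+\mu-c\sum_i\nu_i=0$; comparing it with the normalization row $\lambda_0+\mu+\sum_i\nu_i=0$ forces $\sum_i\nu_i=0$ and $\mu=-\lambda_0$. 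Substituting into the row-$k$ equation for $k\le n$, namely $\lambda_0-c\mu+(1+c)\nu_k-c\sum_i\nu_i=0$, yields $(1+c)(\lambda_0+\nu_k)=0$, so $\nu_k=-\lambda_0$ for all $k$; summing and using $\sum_i\nu_i=0$ gives $\lambda_0=0$, and then $\mu=\nu_k=0$. Hence the support columns are linearly independent and $\ff$ is a vertex.

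The row $k>n$ used above exists precisely because $n\le d-1$, which is where that hypothesis enters; keeping the bijection between monomial indices $\alpha\in\{0,1\}^{d-1}$ and the binary support vectors straight is the only delicate bookkeeping, and the general $F_{j_1,\ldots,j_n}$ follows by relabelling coordinates. I expect the main obstacle to be organizational — laying out the $(n+2)\times(n+2)$ incidence pattern exactly — rather than any hard estimate; once it is written down, the independence is a two-line elimination.
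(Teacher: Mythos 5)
Your proof is correct, and it reaches the same structural conclusion as the paper (the type-0 pmf of $F_n$ is supported on the $n+2$ points $\{\boldsymbol{0},\boldsymbol{v}_0,\boldsymbol{w}_1,\ldots,\boldsymbol{w}_n\}$, and extremality reduces to a rank condition on the corresponding columns of the constraint matrix), but you certify the vertex property through a different, equivalent criterion. The paper forms the matrix $H/I$ by stacking $H$ with the identity rows indexed by the zero components of $\ff$ and invokes Lemma 2.3 of Terzer to conclude from $\mathrm{rank}(H/I)=2^d-1$; operationally this amounts to showing that $n+1$ of the $n+2$ support columns of $H$ are independent while the column of $\prod_{i=1}^n x_i$ is a combination of them (a dependency supplied exactly by $F_n\in\II$). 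You instead use the basic-feasible-solution characterization: the support columns of the augmented matrix $H//\boldsymbol{1}^{\top}$ must be linearly independent, which you verify by explicit elimination using the row $k>n$ (available since $n\le d-1$) against the normalization row. The two criteria are equivalent (a one-dimensional kernel of $H$ restricted to the support, necessarily spanned by $\ff$, versus independence after appending the normalization constraint), but your route has two concrete advantages: it actually supplies the independence computation that the paper only asserts (``it can be proved that $\sum_j \gamma_j(-x_j-\frac{2s-t}{s})+\gamma_{n+1}=0$ for $\xx\in\mathcal{P}$ iff $\gamma=0$''), and it makes explicit why the constant-term point $\boldsymbol{0}$ lies in the support, via $c_0=nc-1>0$ for $p\le 1/2$, which the paper takes for granted. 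The paper's formulation, in exchange, plugs directly into the same Terzer lemma used elsewhere (e.g., in the extremal-point search algorithm), keeping the toolkit uniform.
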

\begin{proof}
Let $\ff$ such that $\mathcal{H}(\ff)=F_{n}(\xx), 2\leq n \leq d-1$ is a fundamental polynomial,
$F_{n}(\xx)=\prod_{i=1}^{n}x_i-\sum_{i=1}^{n}x_i+(n-1)$. Then $\ff$ has support on $n+2$ points $\sup(\ff)=\{i_1,\ldots,i_{n+2}\} \subset \{1,\ldots,D\}$, corresponding to the monomials $-x_j-\frac{2s-t}{s}, j=1,\ldots, n$, $\prod_{i=1}^{n}x_i$ and the constant term.
Let $I^*\in \mathcal{M}((2^d-(n+2))\times2^d)$ be the submatrix of $I_D$ so that $I^*\ff=0$ (it contains the rows $I_D[i,]$ of $I_D$ such that $f_i=0$) . Let us define the matrix
\begin{equation*}
H/I:=H//I^*=\left[ \begin{split}
&H \\
 &I^*
\end{split}\right].
\end{equation*}
Let us consider the columns of $H/I$.
The pmf $\ff$ has mass on $n+2$ points and the corresponding $n+2$ columns $H/I[,j]$ of $H/I$, that we call $H/I_j, j=i_1,\ldots, i_{n+2}$ have all zeros below the rows of $H$,
\begin{equation*}
H/I_j=\left[ \begin{split}
&H_j \\
 &\boldsymbol{0}
\end{split}\right],
\end{equation*}
where $\boldsymbol{0}$ is the $2^{d}-(n+2)$-vector with all zeros.
Among them, consider the columns corresponding to $-x_j-\frac{2s-t}{s}, j=1,\ldots, n$ and constant term. These are $n+1$ independent columns of $H/I$ because it can be proved that $\sum_{j=1}^n \gamma_j (-x_j-\frac{2s-t}{s})+\gamma_{n+1}=0$ for $x\in \mathcal{P}$ iff $\gamma_1=\ldots=\gamma_{n+1}=0$. 
The column corresponding to $\prod_{i=1}^{n}x_i$ is linearly dependent by these $n+1$ independent columns of $H/I$ because $F_n(\xx)$ belongs to the ideal $\II$, that is $F_n(\xx)=0, \xx \in \mathcal{P}$. It follows that $\prod_{i=1}^{n}x_i = \sum_{i=1}^n x_i -n+1$ and then
 $\prod_{i=1}^{n}x_i =- \sum_{i=1}^n (-x_i-\frac{2s-t}{s})-n \frac{2s-t}{s}-n+1$.
The $2^{d}-(n+2)$ remaining columns $H/I_j, j\in\{1,\ldots 2^d\}, j\neq i_1,\ldots, i_{n+2}$  of $H/I$ are independent because $I^*[,j\in\{1,\ldots 2^d\}, j\neq i_1,\ldots, i_{n+2}]$ is an identity matrix. Therefore we have $2^{d}-(n+2)+n+1=2^{d}-1$ independent columns. It follows that
\begin{equation*}
\text{rank}(H/I)=2^{d}-1.
\end{equation*}
From  Lemma 2.3 in \cite{terzer2009large}, $\ff$ is an extremal point.
\end{proof}
\begin{example}
Let us consider $d=3$, $p=2/5$ and $F_2(x_1,x_2)=x_1x_2-x_1-x_2+1$.  As shown in Example \ref{descr:alg}, the corresponding pmf is $\ff=(\frac{2}{5},0,0,\frac{1}{5},0,\frac{1}{5},\frac{1}{5},0)$ and is an extremal pmf. The $H/I$ matrix is
\begin{equation*}
H/I=\left(
\begin{array}{cccccccccccccccc}
1&-\frac{3}{2}  &1&-\frac{3}{2}   & 1&-\frac{3}{2}  &1&-\frac{3}{2} \\
1 & 1&-\frac{3}{2} & -\frac{3}{2}& 1 & 1&-\frac{3}{2} & -\frac{3}{2}\\
 1 & 1& 1 & 1 & -\frac{3}{2} & -\frac{3}{2}& -\frac{3}{2}& -\frac{3}{2}\\
0 & 1 & 0 & 0 & 0 & 0 & 0 & 0 \\
0 & 0 & 1 & 0 & 0 & 0 & 0 & 0 \\
0 & 0 & 0 & 0 & 1 & 0 & 0 & 0 \\
0 & 0 & 0 & 0 & 0 & 0 & 0 & 1
\end{array}%
\right).\label{HI3}
\end{equation*}%
and $\text{rank}(H/I)=2^{3}-1=7$.
\end{example}
%
%
Analogously, it can be proved the following.
\begin{corollary}
If $\mathcal{H}(\ff)=-F(\xx)$, where $F(\xx)$  is a fundamental polynomial, then $\ff$ is an extremal mass function.
\end{corollary}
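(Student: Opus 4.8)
The plan is to mirror the proof of Proposition \ref{type0}, replacing the fundamental polynomial $F_n(\xx)$ by $-F_n(\xx)=-\prod_{i=1}^n x_i+\sum_{i=1}^n x_i-(n-1)$ and tracking the sign changes this induces in the associated type-0 pmf. Without loss of generality I take $F=F_n$. First I would run the three-step algorithm on $-F_n$: since the coefficients of the linear monomials $x_j$ are now $+1$ while the coefficient of the product monomial $\prod_{i=1}^n x_i$ is now $-1$, Step 1 places the mass of the $n$ linear terms on the $\mm_+$ columns and the mass of the product term on its reflected $\mm_-$ column, i.e. on the entry associated with $-\prod_{i=1}^n x_i+\frac{2s-t}{s}$. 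Step 2 produces the constant support point, either $f_1$ or $f_D$ according to the sign of $c_0=-\frac{2s-t}{s}+a_{\emptyset}=\frac{t-(n+1)s}{s}$. Thus $\ff$ is supported on the $n+2$ points corresponding to $x_1,\dots,x_n$, to $-\prod_{i=1}^n x_i+\frac{2s-t}{s}$, and to the constant.

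Next I would reproduce the rank computation. Forming $H/I=H//I^*$ as in Proposition \ref{type0}, the $n+2$ support columns again have only zeros below the rows of $H$, while the remaining $2^d-(n+2)$ columns are independent because $I^*$ restricted to them is an identity block. Among the support columns, the $n+1$ columns attached to $x_1,\dots,x_n$ and to the constant are linearly independent: a vanishing combination of them on $\mathcal{P}$ would be a polynomial of degree at most one lying in $\II$, which is impossible since $\{1,x_1,\dots,x_{d-1}\}$ is a basis of the quotient $\QQ[x_1,\dots,x_{d-1}]/\II$. The remaining support column, attached to $-\prod_{i=1}^n x_i+\frac{2s-t}{s}$, is dependent on these because $-F_n\in\II$, i.e. $\prod_{i=1}^n x_i=\sum_{i=1}^n x_i-(n-1)$ on $\mathcal{P}$, which expresses this column as a combination of the $x_j$-columns and the constant column. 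Counting then gives $\rank(H/I)=\big(2^d-(n+2)\big)+(n+1)=2^d-1$, and Lemma 2.3 in \cite{terzer2009large} yields that $\ff$ is extremal.

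The step needing care is the degenerate marginal $p=\frac{1}{n+1}$, where $c_0=0$ and Step 2 contributes no constant support point, so that $\ff$ is supported on only $n+1$ points. Here the relation $\frac{2s-t}{s}=-(n-1)$ forces $-\prod_{i=1}^n x_i+\frac{2s-t}{s}=-\sum_{i=1}^n x_i$ on $\mathcal{P}$, so the product column equals $-\sum_{j=1}^n(\text{column of }x_j)$ and the $n+1$ support columns now have rank exactly $n$. Since $I^*$ supplies $2^d-(n+1)$ independent columns, the total rank is again $n+\big(2^d-(n+1)\big)=2^d-1$, so extremality is preserved. I expect this bookkeeping — the reflection of the support onto the $\mm_+$/$\mm_-$ side and the collapse of one support point in the degenerate case — to be the only genuine obstacle; everything else transfers verbatim from Proposition \ref{type0}.
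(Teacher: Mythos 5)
Your argument is correct and follows exactly the route the paper intends: the paper gives no written proof for this corollary, stating only that it follows ``analogously'' to Proposition \ref{type0}, and your proposal is precisely that analogous argument with the bookkeeping carried out — the support moves to the $\mm_+$ columns of $x_1,\dots,x_n$ and the reflected $\mm_-$ column of the product term, and the rank count $\bigl(2^d-(n+2)\bigr)+(n+1)=2^d-1$ goes through unchanged. Your treatment of the degenerate case $p=\tfrac{1}{n+1}$, where $c_0=0$ collapses the support to $n+1$ points and the product column becomes $-\sum_j(\text{column of }x_j)$ so that the rank is still $2^d-1$, is a detail the paper silently omits and is handled correctly.
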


\begin{remark}
The extremal points associated to $F(\xx)$ and $-F(\xx)$ are different and they are not symmetric. We can see from Example \ref{example} that $\mathcal{H}(\rr_4)=x_1 x_2-x_1-x_2+1$ and $\mathcal{H}(\rr_9)=-x_1x_2+x_1+x_2-1$. This is a consequence of the role played by the constant terms arising from the monomials with negative coefficients, i.e. the monomials in $\mm_{-}(\xx)$.
\end{remark}

We  proved that to  find a point  in $\FFF_d(p)$ it is sufficient to pick a polynomial in $\MC$. From Corollary \ref{polI} we have that any  $P_f(\xx)\in\MC$ can be obtained as a linear combination of fundamental polynomials.
%
Proposition \ref{multinulli} gives a necessary condition for a point to be an extremal point, it must have support on at most $d+1$ points. Therefore to find an extremal  point not associated to a fundamental polynomial we have to choose a linear combination of at most $d+1$ fundamental polynomials, find a corresponding pmf in $\FFF_d(p)$ and finally check if it is an extremal point by means of Lemma 2.3 in \cite{terzer2009large}.

Proposition \ref{ker} provides a way to easily build extremal points of type-1K as $\ff+\boldsymbol{e}$ where $\ff$ is a type-0 pmf and $\boldsymbol{e} \in \ker(\mathcal{H})$. It is enough to choose $\boldsymbol{e}$ in a way that all the components of $\ff+\boldsymbol{e}$ are positive and then the corresponding type-1K  pmf will be obtained normalizing $\ff+\boldsymbol{e}$.
To find type-0 extremal points not associated to the fundamental polynomials is more challenging. We show an algorithm to find type-0 extremal points which requires the construction of a matrix whose columns are the coefficients of the remainders of the monomials of degree greater than one, $x_{j_1}\ldots x_{j_k}$, with $k \geq 2$.

Let $\boldsymbol{\pi}=(x_1x_2,\ldots, x_1\ldots x_{d-1})^T$ be the row vector of the $2^{d-1}-d$ monomials of degree greater than one  ordered according to the reverse-lexicographical criterion. Let $B$ be the matrix whose elements of the  $j$-th column are the coefficients of the remainders of $\boldsymbol{\pi}_j$ corresponding to the basis $\{1,x_1,\ldots, x_{d-1}\}$ of the quotient space, as illustrated in Table \ref{cicli}.

We look for a (column) vector
$\aa=(a_{12},\ldots, a_{12,\ldots d-1})$,  so that  $$P(\xx)=\sum_{k=2}^{d-1} \sum_{j_1<\ldots<j_k} a_{j_1\ldots j_k} F_{j_1\ldots j_k}(\xx)=b_\emptyset + \sum_{k=1}^{d-1} b_k x_k + \sum_{k=2}^{d-1} \sum_{j_1<\ldots<j_k} a_{j_1\ldots j_k} x_{j_1}\ldots x_{j_k}$$ is associated to an extremal point. The term $-(b_\emptyset + \sum_{k=1}^{d-1} b_k x_k)$ is the remainder of $\sum_{k=2}^{d-1} \sum_{j_1<\ldots<j_k} a_{j_1\ldots j_k} x_{j_1}\ldots x_{j_k}$ .
%

By construction, the product $B\aa$ is the column vector of the coefficients of the remainder of $\sum_{k=2}^{d-1} \sum_{j_1<\ldots<j_k} a_{j_1\ldots j_k} x_{j_1}\ldots x_{j_k}$, that is $B\aa=-(b_\emptyset,b_1,\ldots,b_{d-1})$.
If the $k$-th row of $B\aa$ is zero, the $k$-th term of the remainder of $\sum_{k=2}^{d-1} \sum_{j_1<\ldots<j_k} a_{j_1\ldots j_k} x_{j_1}\ldots x_{j_k}$ is zero. The k-th row of $(B\aa)_{k\cdot}$ is $B_{k\cdot}\aa$, where $B_{k\cdot}$ is the $k$-th row of $B$, thus $B_{k\cdot}\aa$ is $-b_{k-1}$, and $B_{1\cdot}\aa$ is $-b_\emptyset$.

The solutions of $B_{k\cdot}\aa=0$ give the coefficients of all the polynomials $P(\xx)$ which are associated to pmfs in $\FFF_d(p)$ without the components corresponding to $x_{k-1}$ and $-x_{k-1}+\frac{2s-t}{s}$. We observe that $b_\emptyset$ is not immediately related to $f_1$ and $f_D$. For example, in the case $d=4$, $p=2/5$, the polynomial $\frac{1}{5}x_1 x_2 + \frac{1}{5}x_1 x_3 +\frac{1}{5}x_2 x_3 - \frac{2}{5}x_1 x_2 x_3 -\frac{1}{5}$ corresponds to the pmf $\ff=(0,0,0,\frac{1}{5},0,\frac{1}{5},\frac{1}{5},0,\frac{2}{5},0,0,0,0,0,0,0)$. We have $b_\emptyset=-1/5\neq0$ and $f_1=f_{16}=0$. Or the polynomial $x_1x_2-x_1x_3+x_2+x_3$  corresponds to the pmf $\ff=(\frac{1}{5},0,0,\frac{1}{5},\frac{1}{5},0,0,0,0,0,\frac{1}{5},0,0,\frac{1}{5},0,0)$. We have  $b_\emptyset=-0$ and $f_1=1/5 \neq 0$.  For this reason we will not consider the equation $B_{1\cdot}\aa=0$.
Let us suppose that we are interested in polynomials $P(\xx)$ where only some $a_{j_1\ldots j_k}$ can be different from zero. We define $J$ as the corresponding set of indexes, ${j_1\ldots j_k} \in J \leftrightarrow a_{j_1\ldots j_k} \neq0$ . Let $B_{\cdot J}$ be  the matrix whose columns are $B_{\cdot j}$, $j\in J$ and $\aa_{J}$ the sub-vector of $\aa$ whose elements are $a_j, j\in J$.
The  elements of the kernel of the linear application
\begin{equation*}\label{LS1}
(B_{\cdot J}\aa_J)_{K\cdot},
\end{equation*}
where $(B_{\cdot J}\aa_J)_{K\cdot}$ are the rows $(B_{\cdot J}\aa_J)_{k\cdot}$ of $(B_{\cdot J}\aa_J)$, $k\in K\subseteq \{2,\ldots, d\}$,
are the coefficients $\aa$ such that the  polynomial $P(\xx)$ does not have the terms $x_{k-1}, \, k\in K\subseteq\{2,\ldots, d\}$.

The polynomials associated to extremal points, $P(\xx)$,  must have at most $d+2$ non-zero coefficient ($d+2$ because the constant term $b_\emptyset$ does not always correspond to have $f_1$ or $f_D$ in the support of the corresponding pmf). Therefore, we choose $\#J$ monomials in $\boldsymbol{\pi}$, with $\#J\leq d+2$, that means $\#J$ columns of the matrix in Table \ref{cicli}. Then we look for the polynomials  $P(\xx)$ so that the remainder has $1,\ldots, d-1$ terms equal zero.

Formally we have the following steps:
\begin{enumerate}
\item choose $J$ and $K$ with $\#J+ d-\#K \leq d+2$;
\item find the kernel generators of
\begin{equation*}
(B_{J\cdot}\aa^T)_{K\cdot};
\end{equation*}
\item for each kernel generator $\aa^{(i)}$ consider the corresponding polynomial $P^{(i)}(\xx)$;
\item construct the corresponding pmf in $\FFF_d(p)$;
\item check if it is an extremal point of the polytope.
\end{enumerate}
The above algorithm potentially finds all the type-0 extremal points, in practice we can easily  find many of them because the number of systems to be solved increases with the dimension $d$.

\begin{table}[h!]
\begin{center}\caption{Matrix representation of the remainders of the monomials $x_{j1}\ldots x_{jk}$ }
\begin{tabular}{r |c } \label{cicli}
&$\boldsymbol{\pi}(x)$\\
\hline
$\boldsymbol{R}(x)$&B\\
\end{tabular}\,=\,\begin{tabular}{r |r  r  r  r } \label{tabray}
&$x_1x_2$& $x_1x_3$&$\cdots$&$x_1\cdots x_{d-1}$\\
\hline
1&-1 &	-1&	\ldots &	$-(d-1)$ \\
$x_1$&1 &	1 &	\ldots &1\\
$x_2$&1 &	0 &\ldots &	1 \\
$\ldots$&\ldots&&&\\
$x_{d-1}$&0 &	0 &	0 &	1 \\
\end{tabular}
\end{center}
\end{table}

To the only purpose of illustrating the procedure, in the following example we look for the type-0 extremal points for $d=4$, because the case $d=3$ only has two fundamental polynomial (opposite signs) and it is trivial.

\begin{example}
Consider $\FFF_4(2/5)$, $d=4$ and $p=2/5$.

\begin{table}[h!]
\begin{center}\caption{Matrix representation the remainders of the monomials $x_1x_2, \ldots x_1x_2x_3$ }
\begin{tabular}{r |c } \label{cicli3}
&$\boldsymbol{\pi}(x)$\\
\hline
$\boldsymbol{R}(x)$&B\\
\end{tabular}\,=\,\begin{tabular}{r |r  r  r  r } \label{tabray}
&$x_1x_2$& $x_1x_3$&$x_2x_3$&$x_1x_2x_3$\\
\hline
1&-1 &	-1&	-1 &	$-2$ \\
$x_1$&1 &	1 &	0&1\\
$x_2$&1 &	0 &1 &	1 \\
$x_3$&0 &	1&	1 &	1 \\
\end{tabular}
\end{center}
\end{table}
The extremal pmfs have support on at most $d+1=5$ points. Since the remainder has 4 terms, if we decide to combine two fundamental polynomials we have to eliminate at least one monomial of the remainder. By so doing, the remainder will have at most $3$ terms  and  the polynomial will have at most 5 coefficients. As an example we choose the first two columns, i.e.  $J=\{1,2\}$. We have
$\aa=(a_{12}, a_{13})$ and
\begin{equation*}
B_{\cdot J}=\left(\begin{tabular}{rr} \label{BJ}
-1 &-1 \\
1 &1\\
1 &0\\
0 &1 \\
\end{tabular}
\right)
\end{equation*}
We have
\begin{equation*}
B_{\cdot J}\aa=\left(\begin{tabular}{rr} \label{BJ2}
-1 &-1 \\
1 &1\\
1 &0\\
0 &1 \\
\end{tabular}
\right)\aa=\left(\begin{tabular}{r}
$-a_{12}-a_{23} $\\
$a_{12}+a_{23} $\\
$a_{12}$\\
$a_{23}$\\
\end{tabular}
\right)
\end{equation*}

We look for the polynomial without the term corresponding to $x_1$, that is $K={2}$. These are the solutions of the homogeneous system:

\begin{equation*}
(B_{\cdot J}A^T)_2=a_{12}+a_{23}=0
\end{equation*}

We can choose $\aa=(1, -1)$ and we obtain $P(x)=x_1x_2-x_1x_3+x_2-x_3$. The associated pmf is $(\frac{1}{5},0,0,\frac{1}{5},\frac{1}{5},0,0,0,0,0,\frac{1}{5},0,0,\frac{1}{5},0,0)$ and we can verify that is an extremal point.
\end{example}

In the next section we use fundamental polynomials to find a pmf in $\FFF_d(p)$ that satisfies a given condition. We show that it is convenient to  set the condition in terms of coefficient of a polynomial in $\mathcal{C}_H$ and find a corresponding pmf.

\section{Lower bounds for the  convex order} \label{lcx}

A problem extensively addressed in statistics and applied probability is to find  upper and lower bounds for sums $S=X_1+\ldots+X_d$ of random variables $X_i$ of which the marginal distributions are known but the joint distribution is unspecified \cite{kaas2000upper}. These bounds are linked to the highest and lowest dependence structure and they are  actually bounds in the sense of the convex order.  We recall the definition of  the convex order.

\begin{definition}
Given two random variables $X$ and $Y$ with finite means, $X$ is said to be smaller
than $Y$ in the convex order (denoted $X\le_{cx}Y$) if
$$
E[\phi(X)]\leq E[\phi(Y)]
$$
for all real-valued convex functions $\phi$ for which the expectations exist.
\end{definition}
 The convex
order is a variability order, in fact it  is easy to verify that
$X\le_{cx}Y$ implies $E[X]=E[Y]$, and
${V}[X]\leq{{V}[Y]}$.
It can also be proved, see e.g. \cite{shaked2007stochastic}, that
\begin{center}
$X\leq_{cx}Y$ iff $E[X]=E[Y]$ and $E[(X-l)^+]\leq E[(Y-l)^+]$ for all $l\in \RR_+$,
\end{center}
where $x^+=max\{x,0\}$. The last inequality defines the so called stop-loss order, that is important in insurance. See \cite{denuit1998optimal} for a discussion on the relationship between convex and stop-loss orders.

We look for the minimum convex order for sums of   Bernoulli variables with mean $p$, when the joint distribution is unspecified. The problem to find the upper bound is solved and it is well known that the upper bound is the upper Fr\'echet bound of the class, that is the extremal point $\rr^U$ with support on the two points $(0,\ldots,0)$ and $(1,\ldots,1)$. We look for $\XX\in \FFF_d(p)$ such that $S=\sum_{i=1}^dX_i$ is minimal in the sense of the convex order.
Since we consider  $\XX\in \FF_d(p)$ the sums have all the same mean $dp$, thus our problem reduce to find
\begin{equation}\label{minB}
\XX^*=\text{argmin}_{\XX\in \FFF_d(p)}E[(S-l)^+],
\end{equation}
where $S=\sum_{i=1}^dX_i$.

 Let $\mathcal{D}(dp)$ be the class of discrete distributions on $\{0,\ldots, d\}$ with mean $dp$, clearly $S\in \mathcal{D}(dp)$.
The paper \cite{fontana2020model}  proves that the class of sums of exchangeable Bernoulli distributions with the same  mean $p$ coincides
with the entire class of discrete distributions with mean $dp$, $\DDD(dp)$. Therefore the map

\begin{equation*}  \label{map0}
\begin{split}
H: \mathcal{F}_d(p)&\rightarrow \mathcal{D}(dp) \\
\ff&\rightarrow \pp,
\end{split}%
\end{equation*}
where $\pp$ is the pmf of $S=\sum_{i=1}^dX_i$, is onto on $\mathcal{D}(dp).$

Thanks to the above result
we can look for the convex order  bounds in $\mathcal{D}_d(dp)$ to find firstly the bounds for the sums and then the corresponding multivariate Bernoulli distributions. Formally, we look for
\begin{equation}\label{minS}
S^*=\text{argmin}_{S\in \mathcal{D}(dp)}E[(S-l)^+].
\end{equation}

Then, by means of the results in previous sections, we characterize the  points  $\XX\in\FFF_d(p)$ so that $\sum_{i=1}^dX_i=S^*$.

In \cite{fontana2020model} the authors prove that the class $\mathcal{D}_d(dp)$ is a convex polytope and they explicitly found its generators. This result is stated in Proposition \ref{binu}.
\begin{proposition}
\label{binu}
The extremal pmfs of $\mathcal{D}(dp)$, ${s}_{j_1,j_2}$  have support on two points $(j_1, j_2)$  with $j_1=0,1,\ldots, j_1^{M}$, $j_2=j_2^m, j_2^m+1, \ldots, d$, $j_1^M$ is
the largest integer less than $pd$ and $j_2^m$ is the smallest integer
greater than pd.  They are

\begin{equation}  \label{binul}
s_{j_1, j_2}(y)=\left\{
\begin{array}{cc}
\frac{j_2-pd}{j_2-j_1} & y=j_1 \\
\frac{pd-j_1}{j_2-j_1} & y=j_2 \\
0 & \text{otherwise}%
\end{array}
\right..
\end{equation}

If $pd$ is integer the extremal densities contain also
\begin{equation}  \label{onenul}
s_{pd}(y)=\left\{
\begin{array}{cc}
1 & y=pd \\
0 & \text{otherwise}%
\end{array}
\right..
\end{equation}
 If $pd$ is not integer there are $n_p=(j_1^M+1)(d-j_1^M)$ {extremal points}.
If $pd$ is integer there are $n_p=d^2p(1-p)+1$ {extremal points}.

\end{proposition}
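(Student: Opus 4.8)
The plan is to realize $\mathcal{D}(dp)$ as a convex polytope in $\RR^{d+1}$ and to read off its vertices from the support structure. Writing a candidate pmf as $\pp=(p_0,\ldots,p_d)$, the class is
\begin{equation*}
\mathcal{D}(dp)=\Big\{\pp\in\RR^{d+1}:\ \sum_{k=0}^d p_k=1,\ \sum_{k=0}^d k\,p_k=dp,\ p_k\ge 0\Big\}.
\end{equation*}
The first step is to absorb the mean constraint into a single homogeneous equation: since the normalization forces $\sum_k p_k=1$, the mean condition $\sum_k k\,p_k=dp$ is equivalent to $\sum_{k=0}^d (k-dp)\,p_k=0$, i.e. to $A\pp=0$ with the $1\times(d+1)$ matrix $A=(0-dp,\,1-dp,\,\ldots,\,d-dp)$, which has rank $n=1$. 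Thus $\mathcal{D}(dp)$ is exactly a polytope of the form \eqref{cone}, and Proposition \ref{multinulli} applies with $n=1$: every extremal pmf has at most $n+1=2$ nonzero components.

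Given this support bound, I would treat the two possible support sizes separately. If the support is a single point $\{k\}$, then $p_k=1$ and the mean constraint gives $k=dp$; this is feasible only when $dp$ is an integer, and it yields the degenerate extremal pmf $s_{pd}$ of \eqref{onenul}. If the support is a pair $\{j_1,j_2\}$ with $j_1<j_2$, the two equality constraints become the linear system $p_{j_1}+p_{j_2}=1$, $j_1 p_{j_1}+j_2 p_{j_2}=dp$, whose unique solution is precisely \eqref{binul}. Imposing $p_{j_1},p_{j_2}\ge 0$ gives the feasibility range $j_1\le dp\le j_2$, and a genuine two-point support (both weights strictly positive) requires the strict inequalities $j_1<dp<j_2$.

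To confirm that each such feasible point is actually extremal, I would use that it is the unique feasible pmf whose support is contained in $\{j_1,j_2\}$: if it were a convex combination of two distinct members of $\mathcal{D}(dp)$, both summands would have support inside $\{j_1,j_2\}$, contradicting uniqueness of the solution of the $2\times2$ system. Hence the two-point distributions \eqref{binul} with $j_1<dp<j_2$, together with $s_{pd}$ when $dp\in\NN$, are exactly the vertices. It remains to count them. When $dp\notin\NN$, the strict bounds translate into $j_1\in\{0,\ldots,j_1^M\}$ and $j_2\in\{j_2^m,\ldots,d\}$; since $j_2^m=j_1^M+1$ here, this gives $(j_1^M+1)(d-j_1^M)$ pairs. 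When $dp\in\NN$, we have $j_1\in\{0,\ldots,dp-1\}$ and $j_2\in\{dp+1,\ldots,d\}$, i.e. $dp(d-dp)=d^2p(1-p)$ two-point vertices, plus the single point mass $s_{pd}$, for a total of $d^2p(1-p)+1$.

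The routine part is the $2\times2$ solve and the arithmetic of the two counts; the step that needs the most care is the boundary bookkeeping. In the integer case one must check that the pairs with $j_1=dp$ or $j_2=dp$ collapse to the one-point distribution and are therefore not counted among the two-point vertices, so that $s_{pd}$ is added exactly once; and in both cases one must verify that the feasibility range $j_1\le dp\le j_2$ is matched correctly to the floor/ceiling definitions of $j_1^M$ and $j_2^m$. The extremality argument via uniqueness-on-support is what guarantees the enumerated list is complete and contains no spurious points.
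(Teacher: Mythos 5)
Your proposal is correct. Note that the paper itself gives no proof of Proposition \ref{binu}: it is imported verbatim from \cite{fontana2020model}, so there is no in-text argument to compare against. What you have written is a clean, self-contained derivation that reuses the paper's own machinery: you recast the mean constraint as the single homogeneous equation $\sum_k (k-dp)p_k=0$ so that $\mathcal{D}(dp)$ has exactly the form \eqref{cone} with $n=1$, invoke Proposition \ref{multinulli} to cap the support of any vertex at two points, solve the resulting $2\times 2$ system to get \eqref{binul}, and establish extremality by the uniqueness-on-support argument (a nontrivial convex combination of nonnegative vectors cannot vanish where the combination vanishes). The boundary bookkeeping is handled correctly: the degenerate pairs with $j_1=dp$ or $j_2=dp$ collapse to $s_{pd}$ and are excluded from the two-point count, and both counting formulas $(j_1^M+1)(d-j_1^M)$ and $d^2p(1-p)+1$ check out against the floor/ceiling definitions of $j_1^M$ and $j_2^m$. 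This is arguably a nice addition, since it shows the exchangeable-sum polytope result follows from the same rank lemma of \cite{terzer2009large} that the paper already uses for $\FFF_d(p)$ itself, rather than requiring the separate development in \cite{fontana2020model}.
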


For any $\phi$ the extremal values for $E[\phi(S)]$ are reached on the extremal points (see \cite{fontana2021exchangeable}). Thus, the bounds for the convex order are reached on the extremal points.
 In particular, the upper bound of convex order is on the extremal point $s^U=s_{0,d}$. This is a straightforward consequence of the well known fact that the upper bound for  $S=\sum_{i=1}^dX_i$ is the upper Frech\'et bound $\rr^U\in \FFF_d(p)$ and $S^U=\sum_{i=1}^dR_i^U$ with pmf $s_{0,d}$. On the contrary the lower bound is still an open issue. \cite{dhaene1999safest} proved that if $pd<1$ the lower Fr\'echet bound belongs to $\FFF_d(p)$ and corresponds to the lower bound for convex order. Here we generalize their result for each $p$ and $d$.
We first find  the solution of  \eqref{minS}.
\begin{proposition}\label{min:cx}
The solution of equation \eqref{minS}, i.e.
\begin{equation*}
S^*=\text{argmin}_{S\in \mathcal{D}(dp)}E[(S-l)^+].
\end{equation*}
is $S^*=S_{j^M, j^m}$, where $S_{j^M, j^m}$ is the random variable with pmf $s_{j^M,j^m}$.
\end{proposition}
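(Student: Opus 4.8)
The plan is to prove the stronger statement that $S_{j^M,j^m}$ is the \emph{minimal} element of $\mathcal{D}(dp)$ in the convex order, i.e. $S_{j^M,j^m}\le_{cx}S$ for every $S\in\mathcal{D}(dp)$. This is exactly what \eqref{minS} requires, since an element that is $\le_{cx}$ than every other one minimises $E[(S-l)^+]$ simultaneously for all $l\in\RR_+$. Throughout I read $j^M=\lfloor pd\rfloor$ and $j^m=\lceil pd\rceil$, so that $s_{j^M,j^m}$ is the extremal pmf of Proposition \ref{binu} concentrated on the two integers straddling the mean $pd$ (and $j^M=j^m=pd$, $S_{j^M,j^m}=s_{pd}$, in the integer case). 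Because every element of $\mathcal{D}(dp)$ has mean $pd$, the equal-means half of the stop-loss characterisation recalled above is automatic, and it remains only to establish
\begin{equation*}
E[(S_{j^M,j^m}-l)^+]\le E[(S-l)^+]\qquad\text{for all }l\in\RR_+,\ S\in\mathcal{D}(dp).
\end{equation*}

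The first step reduces this continuum of inequalities to finitely many. Every $S\in\mathcal{D}(dp)$ is supported on the integers $\{0,\ldots,d\}$, so its stop-loss transform $l\mapsto E[(S-l)^+]=\sum_{k=0}^{d}(k-l)^+\,\PP(S=k)$ is convex, is affine on each interval $[n,n+1]$ with breakpoints only at integers, equals $pd-l$ for $l\le 0$ and vanishes for $l\ge d$; the same holds for $S_{j^M,j^m}$. Two such piecewise-linear functions that satisfy the inequality at every integer $n\in\{0,\ldots,d\}$ therefore satisfy it on each $[n,n+1]$ by affine interpolation, hence on all of $\RR_+$. Thus it suffices to check the inequality at the integer points $l=n$.

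The second and decisive step is that $S_{j^M,j^m}$ attains the universal pointwise lower bound at every integer. Applying Jensen's inequality to the convex map $x\mapsto(x-n)^+$ gives, for every $S\in\mathcal{D}(dp)$, the bound $E[(S-n)^+]\ge (E[S]-n)^+=(pd-n)^+$. On the other hand, since $S_{j^M,j^m}$ is supported on $\{j^M,j^m\}$ with $j^M\le pd\le j^m$, a direct computation yields $E[(S_{j^M,j^m}-n)^+]=pd-n$ for $n\le j^M$ and $=0$ for $n\ge j^m$, that is exactly $(pd-n)^+$ at every integer $n$, there being no integer strictly between $j^M$ and $j^m$. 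Hence $E[(S_{j^M,j^m}-n)^+]=(pd-n)^+\le E[(S-n)^+]$ for all integers $n$, and by the reduction above the inequality holds for all $l\in\RR_+$. This gives $S_{j^M,j^m}\le_{cx}S$ and proves the claim.

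I expect the main obstacle to be the reduction of the second paragraph rather than the computation: one must argue carefully that comparing the two piecewise-linear stop-loss transforms at their integer breakpoints suffices to conclude the inequality for every real $l$, including on the open interval $(j^M,j^m)$ where the transform of $S_{j^M,j^m}$ lies strictly above the Jensen envelope $(pd-l)^+$ yet still below that of any competitor. The clean mechanism behind the result is that $S_{j^M,j^m}$ is the integer-supported law of mean $pd$ whose stop-loss transform meets the Jensen bound $(pd-n)^+$ at all integers $n$ simultaneously, which forces its minimality in the convex order.
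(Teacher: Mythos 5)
Your proof is correct, and it takes a genuinely different route from the paper's. The paper first invokes the polytope structure of $\mathcal{D}(dp)$ to restrict the minimisation to the extremal pmfs $s_{i,j}$ of Proposition \ref{binu}, and then runs an explicit case analysis over three ranges of $l$ ($0\le l\le j^M$, $j^M<l<j^m$, $j^m\le l\le d$), computing $E_{s_{i,j}}[(S-l)^+]$ for every extremal point and comparing the resulting expressions to single out $s_{j^M,j^m}$. You instead prove the stronger claim $S_{j^M,j^m}\le_{cx}S$ for \emph{every} $S\in\mathcal{D}(dp)$ by combining three clean observations: the Jensen lower bound $E[(S-l)^+]\ge (pd-l)^+$ valid for any law of mean $pd$; the fact that the stop-loss transform of $S_{j^M,j^m}$ attains this bound at every integer $n$ (there being no integer strictly between $j^M$ and $j^m$); and the reduction to integer arguments, which is sound because both stop-loss transforms are convex and affine on each $[n,n+1]$ and vanish beyond $d$, so agreement of the inequality at the breakpoints propagates to all of $\RR_+$. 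Your argument bypasses the extremal-point machinery entirely, is shorter, and yields the comparison against arbitrary elements of $\mathcal{D}(dp)$ rather than only extremal ones (the paper must separately justify that checking extremal points suffices, via the linearity of $\pp\mapsto E[(S-l)^+]$). What the paper's computation buys in exchange is the explicit value of $E_{s_{i,j}}[(S-l)^+]$ for every extremal pmf, which identifies which competitors are tied with the minimiser on which ranges of $l$. One presentational note: the identity $E[(S_{j^M,j^m}-n)^+]=(pd-n)^+$ at integers $n$ is the crux and deserves the one-line expansion $(j^M-n)(j^m-pd)+(j^m-n)(pd-j^M)=pd-n$ (using $j^m-j^M=1$) rather than the phrase ``a direct computation''; with that displayed, the proof is complete, including the integer case $j^M=j^m=pd$.
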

\begin{proof}
To simplify notation let $j^M=m$, then $j^m=m+1$.
Let us first consider the case where $pd$ is not integer.
We observe that $E[(S-l)^+]$ is defined as an expected value of a function of $S \in \mathcal{D}_d(dp)$. Then, for finding the minimum value of $E[(S-l)^+]$, it will be enough to consider the extremal pmfs of $\mathcal{D}_d(dp)$. We will write $E_{s_{i,j}}[(S-l)^+]$ to highlight that the expected value is computed using $s_{i,j}$ as probability mass function:
\[
E_{s_{i,j}}[(S-l)^+]=\sum_{y=0}^d (y-l)^+ s_{i,j}(y)= (i-l)^+ \frac{j-pd}{j-i} +  (j-l)^+ \frac{pd-i}{j-i}.
\]
We consider three different cases:
\begin{enumerate}
\item $0 \leq l \leq m$;
\item $m < l < m+1$;
\item $m+1 \leq l \leq d$.
\end{enumerate}
In the first case ($0 \leq l \leq m$) we partition the set of all the extremal pmfs of  $\mathcal{D}_d(dp)$, namely $\mathcal{R}_{dp}$, into two disjoint classes:
\[
\mathcal{R}_{dp}^{(1)}=\{s_{i,j}: i=0,\ldots,\lfloor l \rfloor, \; j=m+1,\ldots,d\},
\mathcal{R}_{dp}^{(2)}=\{s_{i,j}: i=\lceil l \rceil ,\ldots,m, \; j=m+1,\ldots,d\},
\]
where $\lfloor l \rfloor$ is the largest integer less or equal than $l$ and $\lceil l \rceil$ is the smallest integer larger or equal than $l$.
For any pmf in $\mathcal{R}_{dp}^{(2)}$ we obtain that $E_{s_{i,j}}[(S-l)^+]=dp-l$. For $s_{i,j} \in \mathcal{R}_{dp}^{(1)}$ we get
\[
E_{s_{i,j}}[(S-l)^+]=\frac{pd-i}{j-i}(j-l)
\]
It is easy to verify that $\frac{pd-i}{j-i}(j-l) > pd - l$. It follows that, for $0 \leq l \leq m$, the pmfs in $\mathcal{R}_{dp}^{(2)}$ minimize the expected value and $E_{s_{i,j}}[(S-l)^+]=pd-l$.

In the second case ($m < l < m+1$), we observe that
\[
E_{s_{i,j}}[(S-l)^+]=\frac{pd-i}{j-i}(j-l), i=0,\ldots,m, \; j=m+1, \ldots, d.
\]
Then it is easy to verify that
\[
E_{s_{i,j}}[(S-l)^+] > E_{s_{m,j}}[(S-l)^+], \; i=0,\ldots,m-1, j=m+1,\ldots,d.
\]
and that
\[
E_{s_{m,j}}[(S-l)^+] > E_{s_{m,m+1}}[(S-l)^+], \; j=m+2,\ldots,d.
\]
It follows that, for $m < l < m+1$, the pmf $s_{m,m+1}$ minimizes  the expected value and $E_{s_{m,m+1}}[(S-l)^+]=(pd-m)(m+1-l)$.

In the third case ($m+1 \leq l \leq d$), we observe that $E_{s_{i,j}}[(S-l)^+]=0$ $i=0,\ldots,m$, $j=m+1,\ldots,\lfloor l \rfloor$ and $E_{s_{i,j}}[(S-l)^+]>0$, $i=0,\ldots,m$, $j=\lceil l \rceil ,\ldots,d$. It follows that, for $m+1 \leq l \leq d$, the extremal pmfs $s_{i,j}, i=0,\ldots,m, j=m+1,\ldots,\lfloor l \rfloor$ minimize the expected value and $E_{s_{i,j}}[(S-l)^+]=0$.

We can conclude that for $0 \leq l \leq d$ the pmf that minimizes $E_{s_{i,j}}[(S-l)^+]$ is $s_{m,m+1}$.

Let us now consider the case where $pd$ is integer. It is easy to verify that $s_{pd,pd}$ is the pmf that minimizes the expected value and
\[
E_{s_{pd,pd}}[(S-l)^+]=
\begin{cases}
pd-l &\text{ if } l \leq pd \\
0 &\text{ otherwise}
\end{cases}
.\]
\end{proof}

Therefore the solutions of \eqref{minB}, that is
\begin{equation*}
S^*=\text{argmin}_{S\in \mathcal{D}(dp)}E[(S-l)^+].
\end{equation*}
 are the pmfs of $\XX$ with $\XX$ in $\mathcal{X}^*=\{\XX\in \FFF_d(p): \sum_{i=1}^dX_i=S^*=S_{j^M, j^M}\}$. As usual with a small abuse of notation if $\ff$ is the vector pmf of $\XX\in \chi^*$ we also write $\ff\in \chi^*$.
\begin{proposition}
If $\XX\in \mathcal{X}^*$, then its pmf $\ff$ is such that
\begin{equation*}
\ff=\sum_{i_1}^{n*}\lambda_i\rr_i^*, \,\,\, \lambda_i\neq 0.
\end{equation*}
where $\rr_i^*$ are the extremal points of $\FFF_d(p)$ in $\mathcal{X}^*.$
\end{proposition}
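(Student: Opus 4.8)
The plan is to recognize $\mathcal{X}^*$ as the fibre over a single \emph{vertex} of the polytope $\mathcal{D}(dp)$ under the linear sum map, and to exploit that extremal decompositions are compatible with that map.

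First I would record two facts already available. The map $H:\FFF_d(p)\to\mathcal{D}(dp)$, $\ff\mapsto\pp$, which sends a pmf to the pmf of $S=\sum_{i=1}^dX_i$, is linear, since the $k$-th entry of $\pp$ is $\sum_{|\xx|=k}f_{\xx}$; consequently $H$ sends convex combinations to convex combinations and, in particular, maps each extremal point of $\FFF_d(p)$ to a point of $\mathcal{D}(dp)$. Second, by Proposition~\ref{min:cx} the optimal sum $S^*$ has pmf $s_{j^M,j^m}$, which by Proposition~\ref{binu} is an extremal point of the polytope $\mathcal{D}(dp)$.

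The core step is then short. Fix $\ff\in\mathcal{X}^*$. As a point of the polytope $\FFF_d(p)$ it can be written as $\ff=\sum_{k}\mu_k\rr_k$ with $\mu_k>0$, $\sum_k\mu_k=1$ and $\rr_k$ extremal points of $\FFF_d(p)$. Applying $H$ and using its linearity gives
\begin{equation*}
s_{j^M,j^m}=H\ff=\sum_{k}\mu_k\,(H\rr_k),
\end{equation*}
a convex combination of the points $H\rr_k\in\mathcal{D}(dp)$. Since $s_{j^M,j^m}$ is extremal in $\mathcal{D}(dp)$, it cannot be a nontrivial convex combination of points of $\mathcal{D}(dp)$ unless all of them coincide with it; hence $H\rr_k=s_{j^M,j^m}$ for every $k$ with $\mu_k>0$. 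By the definition of $\mathcal{X}^*$ this says precisely that each such $\rr_k$ lies in $\mathcal{X}^*$, i.e. is one of the $\rr_i^*$. Relabelling, we obtain $\ff=\sum_i\lambda_i\rr_i^*$ with $\lambda_i=\mu_i>0$, which is the assertion.

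The argument has no genuine obstacle beyond bookkeeping: the only point to state carefully is the extremality characterization — that a vertex of a convex polytope admits no nontrivial convex representation by other points of the polytope — together with the elementary observation that $H$ is linear and carries $\FFF_d(p)$ into $\mathcal{D}(dp)$. The crucial structural input is that the minimiser $s_{j^M,j^m}$ identified in Proposition~\ref{min:cx} is a \emph{vertex} of $\mathcal{D}(dp)$; had it been a non-extremal point, the decomposition would have been free to involve extremal points of $\FFF_d(p)$ lying outside $\mathcal{X}^*$, and the conclusion would fail.
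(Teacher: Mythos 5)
Your proof is correct, and it follows the same skeleton as the paper's — write $\ff$ as a positively weighted convex combination of extremal points of $\FFF_d(p)$ and push that decomposition through the linear sum map — but the concluding step is genuinely different. The paper never invokes the extremality of $s_{j^M,j^m}$ in $\mathcal{D}(dp)$: it computes $P(S_{\XX}=k)=\sum_i\lambda_i P(S_{\RRR_i}=k)$ coordinatewise and observes that a positively weighted sum of non-negative numbers vanishes only if every term vanishes, so the defining conditions $P(S_{\XX}=k)=0$ for $k\neq j^M,j^m$ are inherited by every $\RRR_i$ in the support of the decomposition; in effect it exhibits $\mathcal{X}^*$ as a face of the polytope cut out by those vanishing conditions. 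You instead import Proposition \ref{binu} (that $s_{j^M,j^m}$ is a vertex of $\mathcal{D}(dp)$) and use the fact that a vertex admits no nontrivial convex representation. The two routes are equivalent here, because the vertex property of $s_{j^M,j^m}$ is precisely the statement that support contained in $\{j^M,j^m\}$ together with mean $dp$ determines the distribution of $S$ uniquely — the same fact the paper's face argument exploits implicitly. Your version makes that structural dependence explicit (and, as you note, shows exactly where the argument would break if the minimiser were not a vertex), at the cost of needing Proposition \ref{binu} as input; the paper's version is more elementary and self-contained, using only non-negativity of probabilities.
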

\begin{proof}
$\XX\in \FFF_d(p)$ then
\begin{equation*}
\ff=\sum_{i=1}^{n}\lambda_i\rr_i, \,\,\,  \rr_i=(r_{i1},\ldots, r_{i2^d}).
\end{equation*}
Let $S_{\XX}=\sum_{i=1}^dX_i$, $S_{\RRR_i}=\sum_{k=1}^dR_{ik}$ and $\chi_k=\{\xx\in \design: \sum_{j=1}^{d}x_j= k\}$.
\begin{equation*}
\begin{split}
P(S_{\XX}=k)&=\sum_{j\in \chi_k} f_j=\sum_{j\in \chi_k} \sum_{i_1}^{n}\lambda_ir_{ij}\\
&=\sum_{i=1}^{n}\lambda_i\sum_{j\in \chi_k} r_{ij}=\sum_{i=1}^n\lambda_iP(S_{\RRR_i}=k).
\end{split}
\end{equation*}
We have  $\XX\in \mathcal{X}^*$ iff $P(S_{\XX}=k)=0$ for $k\neq j^M, j^m$, that is iff $P(S_{\RRR_i}=k)=0$ for $k\neq j^M, j^m$ and the thesis follows.
\end{proof}

We therefore look for the multivariate Bernoulli variables  $\XX\in\FFF_d(p)$ such that $P(S_{\XX}=k)=0$ for $k\neq j^M, j^m$, where $S_{\XX}=\sum_{i=1}^dX_i$. This is equivalent to look for the pmfs with support $\chi_{M}\cup \chi_{m}$, where $\chi_k=\{x\in \chi: \sum_{i=1}^dx_i=k\}$. In the following simple example the points we find are extremal points.

\begin{example}
Consider $\FFF_3(2/5)$, as in Example \ref{example}. In this case $pd=1.2\in(1,2]$. Thus $S^*=S_{1,2}$ is the lower bound for convex order in $\mathcal{D}(1.2)$. The variable $S^*$ has support on $(1,2)$, thus the extremal points $\rr\in \chi^*$ have support out of  $(0,0,0)$ and  $(1,1,1)$. Therefore the   extremal points are: $\rr_6, \rr_7, \rr_8$, see Table \ref{tab:d}.
\end{example}

We have proved that minimal convex sums correspond to a family $\chi^*$ of multivariate Bernoulli pmf.

In \cite{fontana2020model} it is proved that there is a one to one map between  $\mathcal{D}_d(dp)$ and the class of exchangeable Bernoulli distributions with mean $p$, $\EEE(p)\subseteq \mathcal{F}_d(p)$. Consequently there is one exchangeable distribution in $\chi^*$.  In \cite{fontana2020model} the authors also proved that the exchangeable pmf $\ff^*$ associated to $S^*$ is the pmf with minimal correlation. Therefore, under exchangeability,  minimal convex sums corresponds to minimal correlation.

The following proposition proves that if $\ff\in \chi^*$ then its mean correlation, i.e. the mean of the correlations $\rho(X_i, X_j)$ of each pair of variables $X_i$ and $X_j$, $i,j=1,\ldots,d$, $i<j$, is constant and equal to the correlation of $\ff^*$. Therefore the extremal points belonging to $\chi^*$ generates the pmf with the lower mean correlation.

\begin{proposition} \label{prop:mom2}
Let $f \in \mathcal{F}_d$ be the pmf of a $d$-dimensional multivariate Bernoulli random variable $\XX=(X_1,\ldots,X_d)$ and $p_S$ the pmf of the sum $S=\sum_{i=1}^dX_i$, with $p_S(k)=p_k=P(S=k), k=0,\ldots,d$. The sum of the second-order crossed moments of $\XX$ can be written as a linear combination of the values $p_k, k=2,\ldots,d$ of the pmf $p_S$ of $S$:
\[
\sum_{1 \leq i<j \leq d} E[X_iX_j]=\sum_{k=2}^d \binom{k}{2} p_k
\]
\end{proposition}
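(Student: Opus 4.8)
The plan is to recognise the left-hand side as the expectation of a single function of the sum $S$, namely $\binom{S}{2}$, and then to expand this expectation over the pmf $p_S$. The starting point is that each coordinate $X_i$ takes values in $\{0,1\}$, so the product $X_iX_j$ is the indicator of the event $\{X_i=1,\,X_j=1\}$. Consequently $\sum_{1\le i<j\le d}X_iX_j$ simply counts, for a given realization of $\XX$, the number of unordered pairs of coordinates that are simultaneously equal to one.

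First I would argue the identity at the level of realizations. Fix $\xx\in\design$ with $|\xx|=k$, i.e. exactly $k$ of the $d$ coordinates equal one. The number of unordered pairs $(i,j)$, $i<j$, with $x_i=x_j=1$ is the number of two-element subsets of a $k$-element set, which is $\binom{k}{2}$. Hence the pointwise identity
\[
\sum_{1\le i<j\le d}X_iX_j=\binom{S}{2}
\]
holds as an equality of random variables, with the convention $\binom{0}{2}=\binom{1}{2}=0$. Taking expectations of both sides and conditioning on the value of $S$ gives
\[
\sum_{1\le i<j\le d}E[X_iX_j]=E\!\left[\binom{S}{2}\right]=\sum_{k=0}^{d}\binom{k}{2}\,p_k.
\]

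Finally, since $\binom{0}{2}=\binom{1}{2}=0$, the terms $k=0$ and $k=1$ vanish, so the summation may be restarted at $k=2$, which yields the claimed formula. There is no genuine obstacle in this argument: the whole proof reduces to the elementary combinatorial count of pairs inside a level set $\{|\xx|=k\}$, together with linearity of expectation. The only point worth stating explicitly is the binary nature of the $X_i$, which is exactly what turns each second-order crossed moment $E[X_iX_j]$ into the joint probability $P(X_i=1,X_j=1)$ and makes the indicator interpretation legitimate.
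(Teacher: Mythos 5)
Your proof is correct, and it takes a genuinely different and more direct route than the paper. You establish the pointwise identity $\sum_{1\le i<j\le d}X_iX_j=\binom{S}{2}$ by counting pairs of unit coordinates in each realization, and then simply take expectations; the only facts used are that the $X_i$ are $\{0,1\}$-valued and linearity of expectation. The paper instead works at the level of the pmf: it expands each $E[X_iX_j]$ as a sum of values $f_{ijc}$ over subsets $c$, groups the resulting terms by the number $r+2$ of ones in the support point, invokes a symmetry argument to claim every value of $f$ on the level set $\{|\xx|=r+2\}$ appears equally often, and computes the multiplicity as the ratio $\binom{d}{2}\binom{d-2}{r}\big/\binom{d}{r+2}=\binom{r+2}{2}$. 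The two arguments are combinatorially equivalent (your binomial coefficient $\binom{k}{2}$ is exactly the paper's $\alpha_{r+2}$ with $k=r+2$), but yours is shorter, avoids the somewhat informal ``by symmetry each term appears the same number of times'' step, and has the added benefit of generalizing verbatim to the $\tau$-order statement (Proposition~\ref{prop:momtau}), since $\sum_{i_1<\cdots<i_\tau}X_{i_1}\cdots X_{i_\tau}=\binom{S}{\tau}$ pointwise by the same subset count. The paper's bookkeeping, on the other hand, makes explicit which entries of $\ff$ contribute to each $p_k$, which is the form used elsewhere in the text.
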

\begin{proof}
Given a positive integer $k$, let $\design_k=\{0, 1\}^k$ the set of all binary vectors $\xx_k=(x_1,\ldots,x_k)$ of dimension $k$. We denote by $|\xx|_0$ the sum of the elements of $\xx_k$, $ |\xx_k|_0=\sum_{i=1}^k x_i$. Given a binary vector $\xx_d=(x_1,\ldots,x_d) \in  \design_{d} $,  let $J_{\xx_d}$ be the set of indices corresponding to its non-zero component (e.g. $J_{\xx_4}=\{1,4\}$ for $\xx_4=(1,0,0,1))$. If  $\xx_d$ is the null vector,  $J_{\xx_d}$ is the empty set,  $J_{(0,\ldots,0)}=\emptyset$. We write $f_ {J_{\xx_d}}$ in place of $f(x_1,\ldots,x_d)$ to simplify the notation (e.g. $f(1,0,0,1) \equiv f_{14}\equiv f_{41}$). We denote by $\mathcal{C}({A,b})$ the set of all the subsets of size $b$ of $A$, $0 \leq b \leq \#A$. We denote by $A_{ij}$ the subset of $A=\{1,\ldots,d\}$ which contains all the elements of $A$ except $i$ and $j$, $1 \leq i < j \leq d$.

From the definition of second-order crossed moments we get
\begin{eqnarray*}
E[X_1 X_2]=\sum_{(x_1,\ldots,x_d) \in \design_d} x_1x_2f(x_1,\ldots,x_d) = \sum_{(x_3,\ldots,x_d) \in \design_{d-2}} f(1,1,x_3\ldots,x_d) = \\
=\sum_{r=0}^{d-2}  \sum_{\substack{\xx_{d-2}=(x_3,\ldots,x_d) \in \design_{d-2} \\ |\xx_{d-2}|_0=r}} f(1,1,x_3\ldots,x_d) =\\
=\sum_{r=0}^{d-2}  \sum_{c\in\mathcal{C}(A_{12},r)} f_{12c}
\end{eqnarray*}
Then we obtain
\[
\sum_{1 \leq i<j \leq d} E[X_iX_j]=\sum_{1 \leq i<j \leq d} \sum_{r=0}^{d-2}  \sum_{c\in\mathcal{C}(A_{ij},r)} f_{ijc} = \sum_{r=0}^{d-2}   \sum_{1 \leq i<j \leq d}\sum_{c\in\mathcal{C}(A_{ij},r)} f_{ijc}
\]
We consider the term
\begin{equation} \label{insum}
\sum_{1 \leq i<j \leq d}\sum_{c\in\mathcal{C}(A_{ij},r)} f_{ijc}
\end{equation}
and observe that all the terms $f_{ijc}$ of the summation involve values of the pmf which are computed on vector $\xx_d$ with $r+2$ ones (and the remaining zeros). We also observe that, by symmetry, each term $f_{ijc}$ must appear in the summation the same number of times of any other, let's say $f_{i'j'c'}$. It follows that
\[
\sum_{1 \leq i<j \leq d}\sum_{c\in\mathcal{C}(A_{ij},r)} f_{ijc} = \alpha_{r+2} p_{r+2}
\]
where $\alpha_{r+2}$ can be determined as the ratio between the number of all the terms in the summation of Equations \ref{insum} and $\binom{d}{r+2}$, that is the number of values of $f$ which are used to compute $p_{r+2}$.
We obtain
\[
 \alpha_{r+2}=\frac{\binom{d}{2} \binom{d-2}{r}}{\binom{d}{r+2}}=\binom{r+2}{2}
\]
It follows
\[
\sum_{1 \leq i<j \leq d} E[X_iX_j]= \sum_{r=0}^{d-2} \binom{r+2}{2}p_{r+2}= \sum_{k=2}^{d} \binom{k}{2}p_{k}
\]
\end{proof}
Proposition \ref{prop:mom2} can be generalized to $\tau$-order crossed moments, for $\tau=2,\ldots,d$ as stated in Proposition \ref{prop:momtau}.

\begin{proposition} \label{prop:momtau}
Let $f \in \mathcal{F}_d$ be the pmf of a $d$-dimensional multivariate Bernoulli random variable $\XX=(X_1,\ldots,X_d)$ and $p_S$ the pmf of the sum $S=\sum_{i=1}^dX_i$, with $p_S(k)=p_k=P(S=k), k=0,\ldots,d$. The sum of the $\tau$-order crossed moments of $\XX$, $\tau\geq 2$, can be written as a linear combination of the values $p_k, k=\tau,\ldots,d$ of the pmf $p_S$ of $S$:
\begin{equation} \label{eq:momtau}
\sum_{1 \leq i_1<\ldots<i_\tau \leq d} E[X_{i_1}  \cdots X_{i_\tau}]=\sum_{k=\tau}^d \binom{k}{\tau} p_k
\end{equation}
\end{proposition}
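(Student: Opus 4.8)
The plan is to generalize the double-counting argument behind Proposition \ref{prop:mom2}, replacing the pairs $\{i,j\}$ by arbitrary index sets of size $\tau$. The starting observation is that the product $X_{i_1}\cdots X_{i_\tau}$ equals $1$ exactly on those binary vectors carrying a one in each of the positions $i_1,\ldots,i_\tau$, and equals $0$ otherwise. Hence, for a fixed index set,
\[
E[X_{i_1}\cdots X_{i_\tau}]=\sum_{\substack{\xx\in\design_d \\ x_{i_1}=\cdots=x_{i_\tau}=1}} f(\xx).
\]
Summing over all the index sets $i_1<\cdots<i_\tau$ and interchanging the two finite summations then reduces the whole problem to counting, for each binary vector $\xx$, how many index sets of size $\tau$ contribute the value $f(\xx)$.

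First I would observe that a set $\{i_1,\ldots,i_\tau\}$ contributes $f(\xx)$ precisely when it is contained in $J_{\xx}$, the set of positions at which $\xx$ equals one. Therefore, if $|\xx|_0=k$, the number of contributing index sets is exactly $\binom{k}{\tau}$ (and zero when $k<\tau$). This yields
\[
\sum_{1\le i_1<\cdots<i_\tau\le d} E[X_{i_1}\cdots X_{i_\tau}]=\sum_{\xx\in\design_d}\binom{|\xx|_0}{\tau} f(\xx).
\]
Second, I would group the vectors $\xx$ by their weight $k=|\xx|_0$ and use the defining identity $\sum_{|\xx|_0=k} f(\xx)=P(S=k)=p_k$ to collapse the inner sum, obtaining $\sum_{k=\tau}^d \binom{k}{\tau} p_k$, which is exactly \eqref{eq:momtau}.

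As an alternative that stays closer to the computation already displayed for $\tau=2$, one could instead fix the weight $r+\tau$ of the contributing vectors and count the terms of the form $f_{i_1\ldots i_\tau c}$ by the same symmetry argument; the coefficient $\alpha_{r+\tau}$ would then be the ratio $\binom{d}{\tau}\binom{d-\tau}{r}/\binom{d}{r+\tau}$, and the only real work is the binomial simplification $\binom{d}{\tau}\binom{d-\tau}{r}/\binom{d}{r+\tau}=\binom{r+\tau}{\tau}$. I expect this identity, equivalently the fact that a weight-$k$ vector is selected by exactly $\binom{k}{\tau}$ index sets, to be the single step deserving care; everything else is a routine interchange of finite sums. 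Since the direct route above avoids the ratio manipulation altogether, I would adopt it as the main argument and mention the ratio computation only as a consistency check against the proof of Proposition \ref{prop:mom2}.
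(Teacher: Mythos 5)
Your proposal is correct. The identity the paper actually relies on (it only proves the case $\tau=2$ in Proposition \ref{prop:mom2} and states the general case as a direct generalization) is established there by first fixing the weight class $r+2$, invoking a symmetry argument to assert that every term $f_{ijc}$ appears equally often, and then computing the multiplicity as the ratio $\binom{d}{2}\binom{d-2}{r}/\binom{d}{r+2}=\binom{r+2}{2}$; your secondary route reproduces exactly this scheme with $\tau$ in place of $2$, and the binomial identity $\binom{d}{\tau}\binom{d-\tau}{r}/\binom{d}{r+\tau}=\binom{r+\tau}{\tau}$ you flag as the delicate step is indeed valid (both sides count pairs consisting of a $(r+\tau)$-subset of $\{1,\ldots,d\}$ together with a distinguished $\tau$-subset of it). Your main route is a genuine, and cleaner, reorganization of the same double count: by writing $E[X_{i_1}\cdots X_{i_\tau}]=\sum_{\xx:\,x_{i_1}=\cdots=x_{i_\tau}=1}f(\xx)$ and interchanging the two finite sums, you count, for each fixed $\xx$ of weight $k$, the $\binom{k}{\tau}$ index sets contained in its support, which yields $\sum_{\xx\in\design}\binom{|\xx|_0}{\tau}f(\xx)=\sum_{k=\tau}^d\binom{k}{\tau}p_k$ directly. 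This buys you two things over the paper's argument: the multiplicity $\binom{k}{\tau}$ is obtained by an explicit bijective count rather than by the somewhat informal appeal to symmetry, and the ratio manipulation disappears entirely. What it gives up is only the structural parallelism with the displayed computation of $E[X_1X_2]$ in the proof of Proposition \ref{prop:mom2}; since you also supply that version as a consistency check, nothing is missing.
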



\begin{corollary}
Let $f \in \mathcal{F}_d$ be the pmf of a $d$-dimensional multivariate Bernoulli random variable $\XX=(X_1,\ldots,X_d)$ and $p_S$ the pmf of the sum $S=\sum_{i=1}^dX_i$, with $p_S(k)=p_k=P(S=k), k=0,\ldots,d$. If $p_k=0$ for $k\geq 2$ then $E[X_iX_j]=0$ for $1 \leq i <j \leq d$.
\end{corollary}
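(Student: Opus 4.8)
The plan is to read the claim off directly from Proposition \ref{prop:mom2}, combined with the elementary fact that each second-order crossed moment is nonnegative. First I would invoke Proposition \ref{prop:mom2} to obtain the identity
\[
\sum_{1 \leq i<j \leq d} E[X_iX_j]=\sum_{k=2}^d \binom{k}{2} p_k.
\]
Substituting the hypothesis $p_k=0$ for every $k\geq 2$ annihilates every term on the right-hand side, so the entire sum of crossed moments equals zero.

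The second and only remaining step is to note that each summand is nonnegative. Since $X_i$ and $X_j$ take values in $\{0,1\}$, the product $X_iX_j$ is again a $\{0,1\}$-valued random variable, so $E[X_iX_j]\geq 0$ for every pair $i<j$. A finite sum of nonnegative reals that equals zero forces each term to vanish, and therefore $E[X_iX_j]=0$ for all $1\leq i<j\leq d$, as claimed.

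I expect no genuine obstacle here: the whole mathematical content is carried by Proposition \ref{prop:mom2}, which converts the distribution of the sum $S$ into the aggregate of pairwise moments. The corollary is simply the specialization in which the vanishing of the aggregate, together with the sign constraint coming from the nonnegativity of indicator products, is upgraded to the vanishing of each individual moment. The only point worth stating explicitly in the write-up is this nonnegativity, since it is exactly what turns the single linear identity into the collection of separate equalities $E[X_iX_j]=0$.
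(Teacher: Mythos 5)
Your proof is correct and is essentially the argument the paper intends: the corollary is stated without proof as an immediate consequence of Proposition \ref{prop:mom2}, and the only extra ingredient needed is exactly the nonnegativity of $E[X_iX_j]$ that you make explicit. Nothing to add.
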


\begin{corollary}
Given $S^*$, a discrete random variable defined over $\{0,1,\ldots,d\}$ with pmf $p_S$, $p_S(k)=p_k=P(S=k)$, $k=0,\ldots, d$, let $f \in \mathcal{F}_d$ be the pmf of a $d$-dimensional multivariate Bernoulli random variable $\XX=(X_1,\ldots,X_d)\in {\mathcal{A}}^*$,  where ${\mathcal{A}}^*=\{\XX\in\FFF_d: \sum_{i=1}^dX_i=S^*\}$. The average second-order cross moment $\bar{\mu}_2= \frac{\sum_{1 \leq i<j \leq d} E[X_iX_j]}{\binom{d}{2}}$ can be computed as
\[
\bar{\mu}_2=\frac{1}{d(d-1)}\sum_{k=2}^{d} k(k-1) p_{k}
\]
\end{corollary}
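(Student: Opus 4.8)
The plan is to derive the statement directly from Proposition~\ref{prop:mom2}. The single conceptual step is to note that $\XX\in\mathcal{A}^*$ means, by the definition of $\mathcal{A}^*$, that $S=\sum_{i=1}^d X_i$ is distributed as $S^*$; hence the pmf of $S$ is exactly $p_S$, and the quantities $p_k=P(S=k)$ that appear in Proposition~\ref{prop:mom2} coincide with the $p_k=P(S^*=k)$ of the present statement. Once this identification is made, Proposition~\ref{prop:mom2} applies with the prescribed values $p_k$, and everything that remains is a routine simplification of binomial coefficients.

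Concretely, I would first invoke Proposition~\ref{prop:mom2} to write
\[
\sum_{1\le i<j\le d} E[X_iX_j]=\sum_{k=2}^d\binom{k}{2}p_k,
\]
then divide both sides by the number $\binom{d}{2}$ of unordered pairs to form $\bar{\mu}_2$, and finally substitute $\binom{k}{2}=k(k-1)/2$ together with $\binom{d}{2}=d(d-1)/2$, so that the common factor $1/2$ cancels and one obtains
\[
\bar{\mu}_2=\frac{\sum_{k=2}^d\binom{k}{2}p_k}{\binom{d}{2}}=\frac{1}{d(d-1)}\sum_{k=2}^d k(k-1)p_k,
\]
which is exactly the claimed formula.

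Since the result is an immediate corollary of an already-established identity, I do not anticipate any genuine difficulty: there is no estimate to prove and no object to construct, only an averaging and a cancellation of factorials. The only point deserving care is the first step, namely making explicit that the distributional constraint $\sum_{i=1}^d X_i=S^*$ forces the pmf of the sum entering Proposition~\ref{prop:mom2} to be $p_S$, which is precisely what licenses the use of the prescribed values $p_k$ in the final expression.
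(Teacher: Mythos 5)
Your proposal is correct and matches the paper's intent: the corollary is stated without proof precisely because it follows from Proposition \ref{prop:mom2} by the identification of the pmf of $S$ with $p_S$, division by $\binom{d}{2}$, and cancellation of the factor $1/2$ in $\binom{k}{2}/\binom{d}{2}=k(k-1)/(d(d-1))$. Your explicit attention to why the constraint $\sum_{i=1}^d X_i=S^*$ licenses substituting the prescribed $p_k$ is exactly the one non-trivial observation needed.
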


We observe that for an exchangeable multivariate Bernoulli random variable the average second-order cross moment $\bar{\mu}_2$ coincides with any second-order cross moment $E[X_iX_j], 1 \leq i <j \leq d$.

\begin{corollary}
If $pd$ is not integer, given $S^*=S_{J^M, j^m} \in \mathcal{D}_d(dp)$,  let $f \in \mathcal{F}_d$ be the pmf of a $d$-dimensional multivariate Bernoulli random variable $\XX=(X_1,\ldots,X_d)\in \mathcal{A}^*$, where ${\mathcal{A}}^*=\{\XX\in\FFF_d: \sum_{i=1}^dX_i=S^*\}$. The average second-order cross moment $\bar{\mu}_2= \frac{\sum_{1 \leq i<j \leq d} E[X_iX_j]}{\binom{d}{2}}$ can be computed as
\begin{equation} \label{eq:mu2star}
\bar{\mu}_2=\frac{1}{d(d-1)} (J^M (2pd-J^M-1))
\end{equation}
If $pd$ is integer we have $\bar{\mu}_2=\frac{1}{d(d-1)} (pd (pd-1))=\frac{1}{(d-1)} (p (pd-1))$
\end{corollary}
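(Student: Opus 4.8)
The plan is to read off the pmf of $S^*$ from Propositions~\ref{min:cx} and~\ref{binu} and substitute it into the moment identity of the preceding corollary, $\bar{\mu}_2=\frac{1}{d(d-1)}\sum_{k=2}^{d} k(k-1) p_{k}$. Before computing, I would observe that this identity exhibits $\bar{\mu}_2$ as a function of the sum's pmf $p_S$ alone; since every $\XX\in\mathcal{A}^*$ shares the same sum $S^*$, the quantity $\bar{\mu}_2$ is constant over $\mathcal{A}^*$, so the statement is well posed and it suffices to evaluate the right-hand side for the single pmf $p_S=s_{j^M,j^m}$.

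For the non-integer case, Proposition~\ref{min:cx} gives $S^*=S_{j^M,j^m}$ with $j^m=j^M+1$, and Proposition~\ref{binu} supplies its two masses, $p_{j^M}=\frac{j^m-pd}{j^m-j^M}=(j^M+1)-pd$ and $p_{j^m}=\frac{pd-j^M}{j^m-j^M}=pd-j^M$, the denominators both being $1$. Only the terms $k=j^M$ and $k=j^m$ contribute to the sum, so
\begin{equation*}
\bar{\mu}_2=\frac{1}{d(d-1)}\Bigl[j^M(j^M-1)\bigl((j^M+1)-pd\bigr)+(j^M+1)j^M\bigl(pd-j^M\bigr)\Bigr].
\end{equation*}
I would then factor $j^M$ out of the bracket and collect the coefficient of $pd$; the quadratic terms in $j^M$ cancel and the bracket reduces to $j^M(2pd-j^M-1)$, which yields the claimed identity~\eqref{eq:mu2star}.

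The integer case is immediate: here Proposition~\ref{binu} identifies the minimiser as the degenerate pmf $s_{pd}$ placing unit mass at $pd$, so $p_{pd}=1$ and all other $p_k$ vanish. The moment identity then collapses to the single term $\frac{1}{d(d-1)}\,pd(pd-1)$, and cancelling one factor of $d$ gives $\frac{1}{d-1}\,p(pd-1)$.

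There is no genuine obstacle in this proof: once the explicit form of $S^*$ is in hand from the earlier results, everything reduces to an elementary algebraic simplification of a two-term (or one-term) sum. The only conceptual point worth stating explicitly, rather than merely computing, is the reduction in the first paragraph showing that $\bar{\mu}_2$ depends on $\XX$ only through its sum, which is precisely what allows us to speak of \emph{the} average second-order cross moment on $\mathcal{A}^*$.
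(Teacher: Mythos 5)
Your proof is correct and follows exactly the route the paper intends: the corollary is stated without proof as an immediate consequence of the preceding corollary's identity $\bar{\mu}_2=\frac{1}{d(d-1)}\sum_{k=2}^{d}k(k-1)p_k$ together with the explicit two-point (or one-point) pmf of $S^*$ from Propositions~\ref{binu} and~\ref{min:cx}, and your algebraic simplification of the bracket to $j^M(2pd-j^M-1)$ checks out. Your opening observation that $\bar{\mu}_2$ depends on $\XX$ only through the law of its sum, which makes the statement well posed on all of $\mathcal{A}^*$, is a point the paper makes only implicitly, and is worth having spelled out.
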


\begin{example}
Let $p=11/20$ and $d=5$. We have $pd=11/4=2.75$ and $J^M=2$. Using Equation \ref{eq:mu2star} we get $\bar{\mu}_2=0.25$.  We also have  $S^*=S_{J^M, j^m}\equiv S_{(2, 3)}$. The pmf $\pp_S=(p_0,\ldots,p_5)$ of $S_{2, 3}$ is defined as $p_2=0.25$, $p_3=0.75$ and $p_k=0$,  $k=0,1,4,5$. It follows that all the $5$-dimensional multivariate Bernoulli random variables $\XX=(X_1,\ldots,X_5)$ such that $\sum_{i=1}^5X_i=S_{2, 3}$ have the average of its second-order cross moments equal to $0.25$.
\end{example}

The exchangeable case is more easy because the geometrical representation is simpler than that of the general one. The generators of the exchangeable polytope are known in closed form and in a one-to-one correspondence with the generators of  $\mathcal{D}_d(dp)$.

Using the simple algebraic approach proposed here, we can solve the more challenging problem to  explicitly find a polynomial corresponding to a non exchangeable Bernoulli pmf $\ff\in \chi^*$. This means that if  $\XX$ has pmf $\ff$ then its sum $S=\sum_{i=1}^d$ has support on $j^M, j^m$. The vector $\XX$ correspond to the minimum convex order and to the minimal mean correlation.
Notice that pmfs corresponding to fundamental polynomials  $F_{j_1,\ldots, j_n}(\xx)$ are not in $\chi^*$, for $n\neq d-1$, because the pmfs of  their sums have support on the three points: $\{0, n, d-1\}$. The pmf of the sum corresponding to $F_{d-1}$ has support on $\{0, d-1, d-1\} \equiv \{0, d-1\}$ and then it is not of interest apart from the simple case $d=2$. Similarly, pmfs corresponding to fundamental polynomials  $-F_{j_1,\ldots, j_n}(\xx)$ are not in $\chi^*$, for $n\neq d$, because their sums have support on three points: $\{1, d-n, d\}$.  The pmf of the sum corresponding to $-F_{d-1}$ has support on $\{1, d-(d-1), d\} \equiv \{1, d\}$ and then it is not of interest apart from the simple case $d=2$.

We would like to build a non-exchangeable random variable $X \in \mathcal{F}_d(p)$ whose pmf $\ff_\star$ has support only on the points $x \in \{0,1\}^d$ with $|x|=j^M$ or $|x|=j^m$. In this way the corresponding $\sum_{i=1}^d X_i$ will have support only on $j^M$ and $j^m$.
A possible way for building $\ff_\star$ is based on a well-known tool in algebraic statistics, \cite{diaconis1998algebraic}. We build the exchangeable $\ff_e$ corresponding to the discrete random variable $S_{j^M,j^m}$. This can be easily done taking into consideration Equation \ref{binul} and Equation \ref{onenul}. Then $f_\star$ can be built as $f_e+\epsilon m$, $\epsilon \in (-1,+1)$, where the move $m$ must satisfy the constraints $H\boldsymbol{m}=0$, $\ff_e+\epsilon \boldsymbol{m} \geq 0$, and $\sum_{i=1}^D (\ff_e+\epsilon \boldsymbol{m})_i=1$. The move $\boldsymbol{m}$  can be chosen in the Markov Basis of the matrix $H$. This method works well for small dimensions but become computationally unfeasible for large dimensions $d$.
We now prove that making
 use of the polynomial structure of the generators of $\mathcal{F}_d(p)$ we find a method that provides an analytical solution of the problem and therefore  works also for large dimensions.
  A non-exchangeable pmf $\ff\in \chi^*$ is found as the type-0 pmf associated to a specific linear combination of fundamental polynomials as stated in Theorem \ref{mincx}.

\begin{theorem}\label{mincx}
Let $p=s/t\leq 0.5$,  $a=\frac{2s-t}{s}$, $a_1=|2s-t|=t-2s$ and $a_2=s$. If $p=1/2$ we obtain $a=0$, $a_1=0,\,\,\, a_2={1}$.

\begin{enumerate}
\item We first consider the case  $pd$ not integer.
\begin{enumerate}
\item If $pd+p<j^m$ there are $\alpha_i, \beta_i\in \{0,1\}^d$ with $|\alpha_i|=j^M$, and $|\beta_i|=j^m$ such that the polynomial
\begin{equation*}
P_{d-j^M}(\xx)=-a_2\prod_{i=1}^{d-j^M}x_i+\sum_{i=1,\\ |\alpha_i|=j^M}^{h}\xx^{\alpha_i}+ \sum_{i=1,\\ |\beta_i|=j^m}^{k}\xx^{\beta_{i}}-a_1,
\end{equation*}
where
\begin{equation*}\begin{split}
&k=a_2d-2a_2j^M-a_1j^M\\
&h=a_1+a_2-k.
\end{split}
\end{equation*}
belongs to the ideal $I_\mathcal{P}$ and the corresponding $\XX\in \FF_d(p)$  has   sum $S^*=\sum_{i=1}^dX_i$  with support on $\{j^M, j^m\}$.

\item If $pd+p\geq j^m$, there are $\alpha_i, \beta_i\in \{0,1\}^d$ with $|\alpha_i|=j^M$, and $|\beta_i|=j^m$ such that  the polynomial
\begin{equation*}
P_{d-j^m}(\xx)=-a_2\prod_{i=1}^{d-j^m}x_i+ \sum_{i=1,\\ |\alpha_i|=j^M}^{h}\xx^{\alpha_i}+ \sum_{i=1,\\ |\beta_i|=j^m}^{k}\xx^{\beta_{i}}-a_1,
\end{equation*}
where
\begin{equation*}\begin{split}
&k=a_2d-2a_2j^M-a_1j^M-a_2\\
&h=a_1+a_2-k.
\end{split}
\end{equation*}
belongs to the ideal $I_\mathcal{P}$ and the corresponding $\XX\in \FFF_d(p)$ has sum $S^*=\sum_{i=1}^dX_i$  with support on $\{j^M, j^m\}$.
\end{enumerate}
\item If $pd$ is integer the polynomial
\begin{equation*}
P_{pd}(\xx)=-a_2\prod_{i=1}^{d-pd}x_i+ \sum_{i=1,\\ |\alpha_i|=j^M}^{a_1+a_2}\xx^{\alpha_i}-a_1,
\end{equation*}
belongs to the ideal $I$ and the corresponding $\XX\in \FFF_d(p)$ has sum  with support on $\{pd\}$.
\end{enumerate}
\end{theorem}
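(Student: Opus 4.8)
The plan is to verify, for each of the three cases, two separate facts about the displayed polynomial $P$: first that $P\in\II$, and second that the type-0 pmf $\ff_P$ produced by the algorithm of Section~\ref{MainR} is supported on binary vectors $\xx$ with $|\xx|\in\{j^M,j^m\}$ (respectively $|\xx|=pd$), so that $S=\sum_i X_i$ has the claimed support. The whole argument rests on the correspondence, read off from the type-0 construction, between a monomial of $P$ and a support point of $\ff_P$: a degree-$n$ monomial $\xx^\gamma$ ($\gamma\in\{0,1\}^{d-1}$) with a \emph{positive} coefficient places mass at the vector $(\gamma,0)\in\{0,1\}^d$, which has $n$ ones, whereas with a \emph{negative} coefficient it places mass at the complementary vector, which has $d-n$ ones; the constant term instead feeds into $c_0$ and only governs the masses at $\boldsymbol{0}$ and $\boldsymbol{1}$.

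First I would dispose of the constant bookkeeping. Since the only negative coefficient of $P$ is the $-a_2$ attached to $\prod_{i=1}^{d-j^M}x_i$, the algorithm gives $c_0=-a_2\frac{2s-t}{s}+(-a_1)$; the choices $a_2=s$ and $a_1=t-2s$ make $-a_2\frac{2s-t}{s}=t-2s=a_1$, so $c_0=0$ and no mass lands on $\boldsymbol 0$ or $\boldsymbol 1$. The negative monomial then places its mass on the complement of $(1^{d-j^M}0^{j^M})$, a vector with $j^M$ ones, while each positive degree-$j^M$ (respectively degree-$j^m$) monomial places mass on a vector with $j^M$ (respectively $j^m$) ones. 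Granting $P\in\II$, this already yields the support statement, and the integer case is identical with $j^M=j^m=pd$.

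The core is therefore $P\in\II$, which by Theorem~\ref{map} amounts to $P$ vanishing on $\mathcal{P}=\{\boldsymbol{1}_{d-1},\cc_1,\dots,\cc_{d-1}\}$. Evaluating at $\boldsymbol{1}_{d-1}$ turns every monomial into $1$ and gives $-a_2+h+k-a_1=0$, i.e. $h+k=a_1+a_2$, which is exactly the relation defining $h$. Evaluating at $\cc_j$ (where a monomial equals $-c$ if it contains $x_j$ and $1$ otherwise) and substituting $h+k=a_1+a_2$, I expect the expression to collapse to $(c+1)\bigl(a_2\,\mathbf{1}(j\le d-j^M)-(h_j+k_j)\bigr)$, where $h_j+k_j$ counts the positive exponent vectors containing $x_j$. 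Hence $P\in\II$ is equivalent to the combinatorial requirement that, among the chosen positive exponents, each variable $x_1,\dots,x_{d-j^M}$ occurs in exactly $s$ of them and the remaining variables in none. Counting incidences two ways forces $hj^M+kj^m=(d-j^M)s$, equivalently $k=sd-tj^M$, which is precisely what the stated formulas for $h,k$ satisfy; moreover $h,k\ge 0$ is where the case split enters, since $h\ge 0\iff pd+p\le j^m$.

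The main obstacle is thus the \emph{existence} of such exponents: one must realize a $0$--$1$ incidence array with $h$ rows of weight $j^M$, $k$ rows of weight $j^m$, and all $d-j^M$ column sums equal to $s$. I would obtain this from the Gale--Ryser theorem, checking its hypotheses, namely the matching totals $hj^M+kj^m=(d-j^M)s$, the size bound $j^m\le d-j^M$ (which follows from $j^M<pd\le d/2$), and $s\le h+k=t-s$ (which is $p\le\tfrac12$); alternatively, a cyclic block construction on $d-j^M$ positions gives the incidences explicitly. The one delicate point is the degenerate situation $d-j^M=j^m$, where the size-$j^m$ blocks are forced to coincide with the negative monomial and their coefficients merge; this must be checked separately, but it occurs only in boundary cases (e.g. $p=\tfrac12$) where $k$ vanishes and the construction trivializes. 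Cases 1(b) and 2 follow the same three steps, with the negative monomial $\prod_{i=1}^{d-j^m}x_i$ and the single support value $pd$ respectively, and with the analogous identities $c_0=0$ and $k=sd-tj^M-a_2$ driving the argument.
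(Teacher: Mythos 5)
Your proposal is correct, and it reaches the same linear system for $h,k$ as the paper, but by a genuinely different route for the key step. Where you establish $P\in\II$ by direct evaluation on the variety $\mathcal{P}$ (via Theorem \ref{map}), the paper instead \emph{defines} $P$ as the linear combination $-a_2F_{d-j^M}(\xx)+\sum_iF_{\alpha_i}(\xx)+\sum_iF_{\beta_i}(\xx)$ of fundamental polynomials, so that ideal membership is automatic, and then derives the system for $h,k$ by matching the constant term and requiring the linear terms to cancel in aggregate; the per-variable balance (each of $x_1,\dots,x_{d-j^M}$ appearing in exactly $a_2=s$ positive exponents, the rest in none), which you extract cleanly from $P(\cc_j)=0$ and then enforce by double counting, is handled in the paper only implicitly through its explicit cyclic listing of the $a_2$ copies of $x_1,\dots,x_{d-j^M}$ into $h$ blocks of size $j^M$ and $k$ blocks of size $j^m$. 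Your formulation buys a transparent combinatorial reformulation (a $0$--$1$ incidence array with constant column sums), for which Gale--Ryser or the cyclic construction both apply, and you are more careful than the paper on two points: the verification that $c_0=0$ so no mass leaks onto $\boldsymbol{0}$ or $\boldsymbol{1}$, and the degenerate situation $d-j^M=j^m$ where the degree-$j^m$ positive monomials are forced to coincide with the negative one (one can check that there the merged coefficient stays negative in case 1(a), since $k-a_2=-j^Ma_1<0$, so the support claim survives). The paper's route has the thematic advantage of exhibiting $P$ explicitly as a combination of fundamental polynomials, which is the stated purpose of the section; your route is more self-contained but loses that structural information.
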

\begin{proof}
Assume $pd$ not integer. To simplify the notation let $j^M=m$. We have $j^m=m+1$ and $pd\in(m,m+1)$.
The basic idea is to build $P(\xx)$, a linear combination of fundamental polynomials (which belongs to the ideal $I_\mathcal{P}$ by construction) and that could be rewritten as a polynomial $P_{d-m}(\xx)$ (or $P_{d-(m+1)}(\xx)$) whose corresponding type-0 pmf has support only on points $\xx$, with  $|\xx|=m$ or  $|\xx|=m+1$.

We consider two different cases, $pd+p<m+1$ and $pd+p\geq m+1$.

Let $pd+p<m+1$ and let $P(\xx)$ be  the following linear combination of fundamental polynomials
\begin{equation*}
P(\xx)=-a_2F_{d-m}(\xx)+\sum_{i=1,\\ |\alpha_i|=m}^{h} F_{\alpha_i}(\xx)+ \sum_{i=1,\\ |\alpha_i|=m+1}^{k} F_{\beta_i}(\xx),
\end{equation*}
where $F_{\alpha_i}(\xx) \equiv F_{i_1,\ldots,i_n}(\xx)$, $i_1, \ldots, i_n$ are the positions where $\alpha_i$ is one, and $F_{\beta_i}(\xx)$ is similarly defined. We recall that $F_{i_1,\ldots,i_n}(\xx)=x_{i_1} \cdots x_{i_n} - \sum_{j=1}^n x_{i_j} + (n-1)$. Clearly, $P(\xx)\in I$.

We define $P_{d-m}(\xx)=-a_2\prod_{i=1}^{d-m}x_i+\sum_{i=1,\\ |\alpha_i|=m}^{h}\xx^{\alpha_i}+ \sum_{i=1,\\ |\beta_i|=m+1}^{k}\xx^{\beta_{i}}-a_1$ and we look for $h, k$, $\alpha_i, \beta_i$ such that $P(\xx)=P_{d-m}(\xx)$. It follows that:
\begin{enumerate}
\item \label{c1} the constant term of $P_{d-m}(\xx)$, that is $-a_1$ must be equal to the constant term of $P(\xx)$, that is $-a_2(d-m-1)+h(m-1)+km$;
\item \label{c2} because $P_{d-m}(\xx)$ has not linear terms, the linear terms of $P(\xx)$ must be zero. It follows that the (positive) linear terms of $-a_2F_{d-m}(\xx)$ have to be cancelled by the (negative) linear terms of $F_{\alpha_i}(\xx)$ and $F_{\beta_i}(\xx)$. It is worth noting that in this proof given the term $\gamma_i x_i$, $\gamma_i$ integer, we say that there are $\gamma_i$ linear terms, i.e. we consider $\gamma_i x_i= \underbrace{x_i+\ldots+x_i}_{\gamma_i \text{times}}$ if $\gamma_i>0$ and $\gamma_i x_i= \underbrace{-x_i -\ldots- x_i}_{\gamma_i \text{times}}$ if $\gamma_i<0$.
\end{enumerate}

Condition \ref{c1} is satisfied for any choice of $\alpha_i, i=1,\ldots,k, |\alpha_i|=m$, and $\beta_i, i=1,\ldots,k,  |\beta_i|=m+1$ if $h$, and $k$ are positive solutions of:
\begin{equation*}
h(m-1)+km=a_2(d-m-1)-a_1.
\end{equation*}

To satisfy Condition \ref{c2} we first look for h, k such that the total number of linear terms in  $F_{\alpha_i}(\xx)$ and $F_{\beta_i}(\xx)$ is equal to the total number of linear terms in $-a_2F_{d-m}(\xx)$.  Then we will show that by properly choosing the $h$ polynomials $F_{\alpha_i}$ of degree $m$ and the $k$ polynomials $F_{\beta_i}$ of degree $m+1$ we can simplify all the linear terms in $P(\xx)$. Since all $F_{\alpha_i}(\xx)$  have the same number $m$ of linear terms for any $i$, all $F_{\beta_i}(\xx)$ have the same number $m+1$ of linear terms for any $i$, and the number of linear terms of $-a_2F_{d-m}(\xx)$ is $a_2(d-m)$,  $h$ and $k$ must be positive solutions of:
\begin{equation*}
hm+k(m+1)=a_2(d-m).
\end{equation*}

From Conditions  \ref{c1} and  \ref{c2} we have to find the solutions of
\begin{equation*}\begin{cases}
h(m-1)+km=a_2(d-m-1)-a_1\\
hm+k(m+1)=a_2(d-m).
\end{cases}
\end{equation*}
Standard computations give
\begin{equation*}\begin{split}
&k=a_2d-2a_2m-a_1m\\
&h=a_1+a_2-k.
\end{split}
\end{equation*}
We must check that solutions are positive integers. It is possible to verify that the solutions are integer since
$(m-1)(m+1)-m^2=-1$ and that the solutions are positive iff $pd+p<m+1=j^m$.

A possible not unique choice for $\alpha_i$ and $\beta_i$ can be obtained using the following steps.
\begin{enumerate}
\item The $a_2$ copies of the linear terms $x_1, \ldots, x_{d-m}$ must be ordered, repeating the sequence "$x_1, \ldots, x_{d-m}$" $a_2$ times
\begin{equation} \label{list}
\underbrace{x_1, \ldots, x_{d-m}}_{\text{1st time}}, \underbrace{x_1, \ldots, x_{d-m}}_{\text{2nd time}}, \ldots, \underbrace{x_1, \ldots, x_{d-m}}_{a_2\text{-th time}}
\end{equation}
\item The $\alpha_i, i=1,\ldots,h,\; |\alpha_i|=m$ are determined. This is equivalent to build $h$ mononomials of degree $m$. The first monomial will be built as the product of the first $m$ linear terms (i.e. variables) in the list shown in Equation \ref{list}, i.e. $x_1\cdots x_m$, the second monomial with the subsequent $m$ variables, i.e. $x_{m+1}\cdots x_{2m(mod(d-m))+1}$, and so on for the first $h$ mononomials.
\item Then, in an analogous way, starting from the position $hm+1$ to the end of the list shown in Equation \ref{list} we build $k$ monomials of degree $m+1$. 
\end{enumerate}



Let's now consider the case $pd+p\geq j^m$.  
The proof is similar to the case $pd+p> j^m$ and we look for the positive solutions of
\begin{equation*}\begin{cases}
h(m-1)+km=a_2(d-m-2)-a_1\\
hm+k(m+1)=a_2(d-m-1).
\end{cases}
\end{equation*}
Easy computations give
\begin{equation*}\begin{split}
&k=a_2d-2a_2m-a_1m-a_2\\
&h=a_1+a_2-k.
\end{split}
\end{equation*}
If $pd+p\geq m+1=j^m$ then $pd-m\geq 1-p$ and since $p<1/2$, $pd-m>p$. It is easy to verify that if $pd-m>p$, $h$ and $k$  are both  integers and  positives. Notice that $p=1/2$ implies  $pd+p\geq j^m$ for $d\geq2$, thus it is included in this case.

The case $pd$ integer can be proved similarly by proving that $P_{pd}(\xx)$ is the following linear combination of fundamental polynomials:
\begin{equation*}
P_{pd}(\xx)=-a_2F_{d-pd}(\xx)+ \sum_{i=1,\\ |\alpha_i|=pd}^{a_1+a_2}   F_{\alpha_i}(\xx).
\end{equation*}

\end{proof}

The proof of Proposition \ref{mincx} provides a way to find $\alpha_i$ and $\beta_i$ with a very simple algorithm. Therefore we can easily find a pmf $\ff\in \FFF_d(p)$ minimal with respect to the convex order in any dimension $d$ and for any $p$. From  Lemma 2.3 in \cite{terzer2009large}, we can also check if the density found is an extremal point - we know that the minimum is reached on at least one extremal point.

We conclude this section with some  examples.  Example \ref{E10} of $P_{d-j^M}(\xx)$ in a case where $pd+p<j^m$. Examples \ref{E11} and \ref{E14} show the polynomials corresponding to $\XX\in \chi^*$ in high dimensions and with different choices for $p$.
Example \ref{E13} shows the case $pd$ integer. In all cases, given the final polynomial, the corresponding type-0 pmf can be easily found using the steps in the algorithm described in Section \ref{AlgoPol}.

\begin{example}\label{E10}
Let $d=7$, $s=2$, and $t=5$. We have $p=s/t=2/5$, $pd=14/5=2.8$, $j^M\equiv m=2$, $j^m\equiv m+1=3$, $a=(2s-t)/s=-1/2$, $a_1=1$, and $a_2=2$.

\begin{table}[h]
	\centering
		\begin{tabular}{r|r|r||r|r|r|r}
$S$ &	$\xx_{\alpha_i}$ &	constant &$d_j=d-j$&	$\xx_{d_j}$ & 		constant & 		a \\
		\hline
$S=2$&	$x_{i_1}x_{i_2}$&+1&5&$-x_1\cdots x_5$&-4&$-\frac{1}{2}$\\
$S=3$&	$x_{i_1}x_{i_2}x_{i_3}$&+2&4&$-x_1\cdots x_4$&-3&$-\frac{1}{2}$\\
		\end{tabular}
	\caption{Terms to be balanced}
\end{table}
Since $pd+p=\frac{16}{5}>3$, we have to consider $P_{d-j^m}(\xx) \equiv P_{7-3}(\xx) = P_{4}(\xx)$. Thus we have to find $h, k$ such that
\begin{equation*}\begin{cases}
h+2k=5\\
2k+3k=8.
\end{cases}
\end{equation*}
We find $h=1, k=2$.

Since the linear term of $-2F_{1,\ldots,4}(\xx)$ is $2\sum_{i=1}^4x_i$ we have split the $2\cdot4=8$ variables listed in the first row of Table \ref{tab:x} using $h=1$ group with $m=2$ variables and $k=2$ groups with $m+1=3$ variables. The second row of Table \ref{tab:x} reports the corresponding monomials.

\begin{table}[h]
	\centering
		\begin{tabular}{r|r|r|r|r}
Linear terms&$x_1\,\,\,	x_2$ &	$x_3	\,\,\,x_4 \,\,\, x_1$&	$x_2\,\,\,x_3 \,\,\,	x_4$	 \\
\hline
Monomials &$x_1x_2$&$x_3x_4x_1$&${x_2x_3x_4}$
		\end{tabular}
	\caption{Linear terms}
	\label{tab:x}
\end{table}
The resulting polynomial is:
\begin{equation*}
P_{4}(\xx)=-2x_1x_2x_3x_4+x_1x_2+x_1x_3x_4+x_2x_3x_4-1.
\end{equation*}

\end{example}
\begin{example}\label{E11}
Consider the following two cases:
\begin{enumerate}
\item Let $d=9$, $s=2$, and $t=5$. We have $p=s/t=2/5$, $pd=18/5=3.6$, $j^M\equiv m=3$, $j^m\equiv m+1=4$, $a=(2s-t)/s=-1/2$, $a_1=1$, and $a_2=2$.
Since $pd+p=\frac{20}{5}=4$, we have to consider $P_{d-j^m}(\xx)=P_{5}(\xx)$.
We find $h=2, k=1$ and
\begin{equation*}
P_5(\xx)=-2x_1\cdots x_5+x_1x_2x_3+x_1x_4x_5+x_2x_3x_4x_5-1.
\end{equation*}

\item Let $d=9$, $s=2$, and $t=7$. We have $p=s/t=2/7$, $pd=18/7 \approx 2.57$, $j^M\equiv m=2$, $j^m\equiv m+1=3$, $a=(2s-t)/s=-3/2$, $a_1=3$, and $a_2=2$.
Since $pd+p=\frac{20}{7}<3$, we have to consider $P_{d-m}(\xx)=P_{7}(\xx)$.
We find $h=1, k=4$ and
\begin{equation*}
P_7(\xx)=-2x_1\cdots x_7+x_1x_2+x_3x_4x_5+x_1x_6x_7+x_2x_3x_4+x_5x_6x_7-3.
\end{equation*}

\end{enumerate}
\end{example}

\begin{example}\label{E13}
Let $d=5$, $s=2$, and $t=5$. We have $p=s/t=2/5$, $pd=2$ $a=(2s-t)/s=-1/2$, $a_1=1$, and $a_2=2$. The value of $pd$ is integer and then we ave to consider $P_{d-pd}(\xx)=P_{3}(\xx)$.
\begin{equation*}
P_{3}(\xx)=-2x_1 x_2 x_3 +x_1 x_2 + x_{1} x_{3} + x_{2} x_{3}  -1.
\end{equation*}
\end{example}
\begin{example}\label{E14}
Let $d=216$, $s=2$, and $t=5$. We have $p=s/t=2/5$, $pd=432/5 =86.4$, $j^M\equiv m=86$, $j^m\equiv m+1=87$, $a=(2s-t)/s=-1/2$, $a_1=1$, and $a_2=2$.
Since $pd+p=86.8$ is less than $m+1=87$, we have to consider $P_{d-m}(\xx)=P_{130}(\xx)$.
We find $h=1, k=2$ and
\begin{equation*}
P_{130}(\xx)=-2x_1\cdots x_{130}+x_1 \cdots x_{86}+ x_{1} \cdots x_{43} \cdot x_{87} \cdots x_{130}+ x_{44} \cdots x_{130}  -1.
\end{equation*}
\end{example}

\subsection{The safest dependence structure}

Since convex order is a variability order the minimal convex order correspond to a minimal risk random variable and the minimal convex order can be thought as the safest dependence structure. If $pd<1$ the latter is linked to the strongest negative dependence, i.e. mutual exclusivity. This section generalizes the notion of mutual exclusivity to the case $pd>1$ and discusses its link with the minimal convex order.

In \cite{cheung2014characterizing} the authors characterize mutual exclusivity as the strongest negative dependence structure. When mutual exclusivity is possible, it corresponds to the minimal convex order and in this light it can be considered the safest dependence structure.
A vector $\XX\in \FFF_d(p)$ is mutually exclusive if

\begin{equation*}
P(X_{i_1}=1, X_{i_2}=1)=0, \,\,\, \forall i_1, i_2\in\{0,\ldots,d\}.
\end{equation*}
The authors also show that the distribution of a mutually exclusive random vector is the lower Fr\'echet bound of its Fr\'echet class, that in our framework is $F_L(\xx)=max(\sum_{i=1}^dF_i(x_i)-d+1, 0)$.
 Nevertheless, if $pd>1$ mutual exclusivity is not possible and the lower Fr\'echet bound is not a pmf.
We now generalize the notion of mutual exclusivity to any $p$ and $d$ and discuss its link with negative correlation.

%
%

\begin{definition}
Let $\XX\in \FFF_d(p)$, $\XX$ is mutually exclusive of order $m$ if

\begin{equation*}
P(X_{i_1}=1,\ldots, X_{i_m}=1)=0, \,\,\, \forall i_1,\ldots, i_m\in\{0,\ldots,d\}.
\end{equation*}
\end{definition}

If $m=2$, $\XX$ is mutually exclusive.

\begin{proposition}
Let $\XX\in \FFF_d(p)$ then $\XX$ is  mutually exclusive of order $m$ iff $P(S\geq m)=0$.
\end{proposition}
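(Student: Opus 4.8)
The plan is to reduce both sides of the claimed equivalence to statements about non-negative sums, leveraging the crossed-moment identity already established in Proposition \ref{prop:momtau}. The starting observation is that, because each $X_i$ takes values in $\{0,1\}$, the product $X_{i_1}\cdots X_{i_m}$ equals $1$ exactly when $X_{i_1}=\cdots=X_{i_m}=1$ and is $0$ otherwise, so that $E[X_{i_1}\cdots X_{i_m}]=P(X_{i_1}=1,\ldots,X_{i_m}=1)$. Hence mutual exclusivity of order $m$ is precisely the statement that every $m$-th order crossed moment of $\XX$ vanishes.

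Next I would exploit non-negativity. Each crossed moment $E[X_{i_1}\cdots X_{i_m}]$ is a probability, hence non-negative; therefore the aggregate sum $\sum_{1\le i_1<\cdots<i_m\le d}E[X_{i_1}\cdots X_{i_m}]$ is zero if and only if each individual summand is zero. In other words, $\XX$ is mutually exclusive of order $m$ if and only if this total crossed moment equals $0$. This is the step that lets us pass between the ``for all index tuples'' formulation in the definition and a single scalar identity.

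I would then invoke Proposition \ref{prop:momtau} with $\tau=m$, which gives
\begin{equation*}
\sum_{1\le i_1<\cdots<i_m\le d}E[X_{i_1}\cdots X_{i_m}]=\sum_{k=m}^d\binom{k}{m}p_k,
\end{equation*}
where $p_k=P(S=k)$. Since $\binom{k}{m}>0$ for every $k\ge m$ and each $p_k\ge 0$, the right-hand side vanishes if and only if $p_k=0$ for all $k\ge m$, which is exactly the condition $P(S\ge m)=\sum_{k=m}^d p_k=0$. Chaining the three equivalences yields the claim.

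The argument is essentially bookkeeping of non-negative sums and has no genuinely hard step; the only point requiring care is the recurring use of the fact that a sum of non-negative terms vanishes only when every term vanishes, first for the crossed moments and then for the $p_k$. As a fully self-contained alternative one may avoid the moment machinery altogether: the event $\{S\ge m\}$ is the finite union over $m$-subsets $\{i_1<\cdots<i_m\}$ of the events $\{X_{i_1}=1,\ldots,X_{i_m}=1\}$, while conversely each such event is contained in $\{S\ge m\}$, so monotonicity and finite subadditivity of $P$ give both implications directly. I would present the moment-based route as the primary proof, since it ties the statement cleanly to the results just developed.
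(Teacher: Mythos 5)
Your argument is correct, but your primary route differs from the paper's. The paper proves the equivalence directly with elementary probability: if $P(S\geq m)\neq 0$ then some $P(S=k)\neq 0$ with $k\geq m$, hence some event $\{X_{i_1}=1,\ldots,X_{i_k}=1\}$ has positive probability, and by monotonicity the sub-event on any $m$ of those indices does too; conversely, $m$-exclusivity forces $P(S=k)=0$ for all $k\geq m$. This is essentially the ``fully self-contained alternative'' you sketch at the end via the union $\{S\geq m\}=\bigcup_{i_1<\cdots<i_m}\{X_{i_1}=1,\ldots,X_{i_m}=1\}$. Your primary proof instead routes through Proposition \ref{prop:momtau} with $\tau=m$: since each crossed moment $E[X_{i_1}\cdots X_{i_m}]=P(X_{i_1}=1,\ldots,X_{i_m}=1)$ is non-negative, their sum vanishes iff all vanish, and the identity $\sum E[X_{i_1}\cdots X_{i_m}]=\sum_{k=m}^d\binom{k}{m}p_k$ with $\binom{k}{m}>0$ for $k\geq m$ reduces this to $p_k=0$ for all $k\geq m$. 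Both are valid; the moment-based route is a nice illustration of the machinery just developed and makes the ``sum of non-negatives'' bookkeeping explicit, while the paper's (and your alternative) argument is shorter, needs no auxiliary identity, and does not depend on Proposition \ref{prop:momtau}, which the paper states without proof. Either version is acceptable; if you lead with the moment-based proof you should note that it inherits whatever justification Proposition \ref{prop:momtau} requires.
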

\begin{proof}
By contradiction, if $P(S\geq m)\neq0$ then  $P(S=k)\neq 0$ for a $k> m$ and  it exists $X_{i_1}, \ldots, X_{i_k}$ such that $P(X_{i_1}=1,\ldots, X_{i_k}=1)\neq0$, therefore $\XX$ is not $k$-exclusive. $P(X_{i_1}=1,\ldots, X_{i_m}=1)\geq P(X_{i_1}=1,\ldots, X_{i_k}=1)$ since $m\leq k$ thus $\XX$ is not $m$- exclusive.

If $\XX$ is $m$-exclusive then $P(S=m)=0$  and $P(S= k)\leq P(S=m)$ for $k\geq m$ thus $P(S\geq m)=0$.
\end{proof}

The following result is a straightforward consequence of the fact that, to preserve the condition on the mean,
if  $dp\in(j^M, j^m]$, then $P(S\geq j^m)\neq 0$.

\begin{proposition}
Let $\XX\in \FFF_d(p)$. If $dp\in (j^M, j^m]$ then $\XX$ cannot be $k$-exclusive for all $k\leq j^m$.
\end{proposition}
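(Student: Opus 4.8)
The plan is to translate $k$-exclusivity into a statement about the distribution of the sum $S=\sum_{i=1}^d X_i$ and then exploit the fact that the common mean $dp$ is pinned down throughout $\FFF_d(p)$. By the characterization proved immediately above, $\XX$ is mutually exclusive of order $k$ if and only if $P(S\geq k)=0$; hence it suffices to show that $P(S\geq k)>0$ for every $k\leq j^m$, since this is precisely the negation of $k$-exclusivity.

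The crux is the single inequality $P(S\geq j^m)>0$, which is the fact quoted before the statement. I would establish it by contradiction. If $P(S\geq j^m)=0$, then $S\leq j^m-1=j^M$ almost surely, so $E[S]\leq j^M$. On the other hand, since each $X_i\sim B(p)$, linearity of expectation gives $E[S]=\sum_{i=1}^d E[X_i]=dp$, and the hypothesis $dp\in(j^M,j^m]$ forces $dp>j^M$. Combining the two yields $dp=E[S]\leq j^M<dp$, a contradiction. Hence $P(S\geq j^m)>0$.

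The conclusion then follows by monotonicity of the tail events: for any $k\leq j^m$ we have the inclusion $\{S\geq j^m\}\subseteq\{S\geq k\}$, whence $P(S\geq k)\geq P(S\geq j^m)>0$. Applying the characterization once more, no such $k$ can render $\XX$ mutually exclusive of order $k$; that is, $\XX$ cannot be $k$-exclusive for any $k\leq j^m$, which is the assertion.

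I do not anticipate a genuine obstacle here, and indeed the remark preceding the statement already flags it as a straightforward consequence of $P(S\geq j^m)\neq 0$. The only quantitative inputs are $dp>j^M$ together with $j^m=j^M+1$, both read directly off the half-open interval hypothesis $dp\in(j^M,j^m]$; since these two facts are all the mean argument uses, no case distinction between integer and non-integer $dp$ is needed, and the tail monotonicity takes care of passing from $k=j^m$ down to all smaller $k$.
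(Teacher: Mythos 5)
Your argument is correct and is exactly the route the paper takes: the paper gives no displayed proof, only the preceding remark that the result is ``a straightforward consequence of the fact that, to preserve the condition on the mean, \dots\ $P(S\geq j^m)\neq 0$,'' and your contradiction via $E[S]=dp>j^M$ combined with the characterization of $k$-exclusivity as $P(S\geq k)=0$ and the tail inclusion $\{S\geq j^m\}\subseteq\{S\geq k\}$ is precisely the fleshed-out version of that remark. The one caveat is your claim that no integer/non-integer case distinction is needed: the identity $j^m-1=j^M$ you rely on holds only when $pd$ is not an integer, since by the paper's own definitions an integer $pd$ gives $j^M=pd-1$ and $j^m=pd+1$, in which case $P(S\geq j^m)=0$ forces only $S\leq pd$, which is compatible with $E[S]=pd$ (and indeed $S\equiv pd$ is then $j^m$-exclusive); this looseness is already present in the paper's statement rather than introduced by you, but your proof as written covers only the non-integer case.
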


This is in line with the condition that the lower Fr\'echet bound is a distribution and belongs to the  class $\FFF_d(p)$ iff $pd<1$. 
A consequence of the above results is that if $pd\in(j^M, j^m]$ the vector in $\FFF_d(p)$ cannot  be mutually exclusive of order lower  that $j^m$.
Thus $j^m$-mutually exclusive pmfs have sums with support on $\{0,1,\ldots, j^m\}$. Since the minimum of the  convex order is reached on $\chi^*$, the safest dependence structure is $j^m$-mutually exclusive. Nevertheless, Proposition \ref{min:cx} implies that if $pd>1$ not all the $j^m$-mutually exclusive pmfs are minimal with respect to the convex order, but only the ones with support on $\{j^M, j^m\}$.

{As already observed, fundamental polynomials of degree $n$ and their opposite have  sums with support on three points: $\{0,n, d-1\}$ or $\{1, d-n, d\}$, respectively. This means that they cannot be $m$-exclusive of order lower that $m=d-1$ or $m=d$.
The peculiarity of their support let us wonder whether they are associated to specific dependence structures that are able to generate the admissible dependencies in the class.}

\section{Conclusion}\label{Conc}

We map the class $\FFF_d(p)$ into an ideal of points and we show that in any dimension the class $\FFF_d(p)$ is generated by a set of polynomials that we call fundamental polynomials.
Each pmf in $\FFF_d(p)$ is associated to a linear combination of fundamental polynomials.
This representation turns out to be very important to address open issues in the study of multivariate Bernoulli distributions.
As a first application, we prove that a specific linear combination of fundamental polynomials solves the open problem to find a minimal distribution with respect to the convex order of sums and with respect to a measure of negative dependence.
 Lower bounds in the convex order identify the safest dependence structure, that is important in many fields, such as insurance or finance.
In particular we are interested in applying our results to credit portfolio management,  where multivariate  Bernoulli distributions are used to model indicators of default.  In this framework sums represent aggregate defaults and their bounds in the convex order are helpful to identify bounds for the risk associated to credit portfolios.  Nowadays, the real economy is highly interconnected and  banks and financial intermediaries are exposed to  losses arising from defaults of obligors that are not independent.
 Since there are usually hundreds of obligors, banks need to handle high dimensional portfolios, hence the importance of  analytical results.

Our theoretical future research will focus on fundamental polynomials, as generators of the $\FFF_d(p)$, and in particular on two open issues.
First, fundamental polynomials of degree $n$, $n<d-1$ and their opposites have  sums with support on three points: $0,n, d-1$ or $1, d-n, d$, respectively. This means that they cannot be $m$-exclusive of order lower than $m=d-1$ or $m=d$.
The peculiarity of their support and its connection with a dependence notion puts in question if pmfs associated to fundamental polynomials can be characterized in terms of their dependence structure and if we can then identify a class of dependencies  that are able to generate all the admissible dependencies in the class.
Second, fundamental polynomials correspond to extremal points of the polytope $\FFF_d(p)$  and we have used them in this work to provide an algorithm to  find  extremal points in high dimensions. The connection between algebraic and geometrical generators and also their connections with bounds for the class will be  further investigated.
\appendix

\section*{Acknowledgements}
The authors gratefully acknowledge financial support from the Italian Ministry of Education, University and Research, MIUR, "Dipartimenti di Eccellenza"
grant 2018-2022.

\bibliographystyle{apalike}
\bibliography{biblioRF}

\end{document}